\def\RSsubtxt{section~}\newref{sub}{name = \RSsubtxt}}
\def\RSthmtxt{theorem~}\newref{thm}{name = \RSthmtxt}}
\def\RSlemtxt{lemma~}\newref{lem}{name = \RSlemtxt}}
\theoremstyle{plain}
\newtheorem{thm}{\protect\theoremname}[section]
  \theoremstyle{definition}
  \newtheorem{defn}[thm]{\protect\definitionname}
  \theoremstyle{remark}
  \newtheorem{rem}[thm]{\protect\remarkname}
  \theoremstyle{plain}
  \newtheorem{lem}[thm]{\protect\lemmaname}
  \theoremstyle{plain}
  \newtheorem{prop}[thm]{\protect\propositionname}
 \newlist{casenv}{enumerate}{4}
 \setlist[casenv]{leftmargin=*,align=left,widest={iiii}}
 \setlist[casenv,1]{label={{\itshape\ \casename} \arabic*.},ref=\arabic*}
 \setlist[casenv,2]{label={{\itshape\ \casename} \roman*.},ref=\roman*}
 \setlist[casenv,3]{label={{\itshape\ \casename\ \alph*.}},ref=\alph*}
 \setlist[casenv,4]{label={{\itshape\ \casename} \arabic*.},ref=\arabic*}
  \theoremstyle{plain}
  \newtheorem{cor}[thm]{\protect\corollaryname}
  \theoremstyle{definition}
  \newtheorem{example}[thm]{\protect\examplename}
\newcommand{\Rt}{\widetilde{\mathcal{R}}_E}
\newcommand{\Lt}{\widetilde{\mathcal{L}}_E}
\newcommand{\Ht}{\widetilde{\mathcal{H}}_E}
\newcommand{\Rs}{\mathcal{R}^{\ast}}
\newcommand{\Ls}{\mathcal{L}^{\ast}}
\DeclareMathOperator{\id}{id}
\DeclareMathOperator{\Rad}{Rad}
\DeclareMathOperator{\im}{\mathsf{im}}
\DeclareMathOperator{\PT}{\mathcal{PT}}
\DeclareMathOperator{\T}{\mathcal{T}}
\DeclareMathOperator{\Rc}{\mathcal{R}}
\DeclareMathOperator{\Lc}{\mathcal{L}}
\DeclareMathOperator{\Hc}{\mathcal{H}}
\DeclareMathOperator{\Dc}{\mathcal{D}}
\DeclareMathOperator{\dom}{\mathsf{dom}}
\DeclareMathOperator{\op}{op}
\DeclareMathOperator{\Reg}{Reg}
\DeclareMathOperator{\db}{\mathbf{d}}
\DeclareMathOperator{\rb}{\mathbf{r}}
\def\RSlemtxt{Lemma~}
\def\RSthmtxt{Theorem~}
\def\RSsubtxt{Section~}
\providecommand{\corollaryname}{Corollary}
  \providecommand{\definitionname}{Definition}
  \providecommand{\examplename}{Example}
  \providecommand{\lemmaname}{Lemma}
  \providecommand{\propositionname}{Proposition}
  \providecommand{\remarkname}{Remark}
 \providecommand{\casename}{Case}
\providecommand{\theoremname}{Theorem}
  \providecommand{\corollaryname}{Corollary}
  \providecommand{\definitionname}{Definition}
  \providecommand{\examplename}{Example}
  \providecommand{\lemmaname}{Lemma}
  \providecommand{\propositionname}{Proposition}
  \providecommand{\remarkname}{Remark}
 \providecommand{\casename}{Case}
\providecommand{\theoremname}{Theorem}
\begin{document}

\title{Algebras of Ehresmann semigroups and categories}

\author{Itamar Stein \footnote{email: Steinita@gmail.com} \thanks{This paper is part of the author's PHD thesis, being carried out under
the supervision of Prof. Stuart Margolis. The author's research was
supported by Grant No. 2012080 from the United States-Israel Binational
Science Foundation (BSF) and by the Israeli Ministry of Science, Technology and Space.}}

\affil{ Department of Mathematics\\
 Bar Ilan University\\
 52900 Ramat Gan\\
 Israel}

\maketitle

\begin{abstract}
$E$-Ehresmann semigroups are a commonly studied generalization of
inverse semigroups. They are closely related to Ehresmann categories
in the same way that inverse semigroups are related to inductive groupoids.
We prove that under some finiteness condition, the semigroup algebra
of an $E$-Ehresmann semigroup is isomorphic to the category algebra
of the corresponding Ehresmann category. This generalizes a result
of Steinberg who proved this isomorphism for inverse semigroups and
inductive groupoids and a result of Guo and Chen who proved it for
ample semigroups. We also characterize $E$-Ehresmann semigroups whose
corresponding Ehresmann category is an EI-category and give some natural
examples.
\end{abstract}

\section{Introduction}

A semigroup $S$ is called inverse if every element $a\in S$ has
a unique inverse, that is, a unique $b\in S$ such that $aba=a$ and
$bab=b$. Inverse semigroups are fundamental in semigroup theory and
have many unique and important properties. For instance, they are
ordered with respect to a natural partial order and their idempotents
form a semilattice. Another important fact is their close relation
with inductive groupoids. More precisely, the Ehresmann-Schein-Nambooripad
theorem \cite[Theorem 8 of Section 4.1]{Lawson1998} states that the
category of all inverse semigroups is isomorphic to the category of
all inductive groupoids. For an extensive study of inverse semigroups
see \cite{Lawson1998}. There are several generalizations of inverse
semigroups that keep some of their good properties. In this paper
we discuss a generalization called $E$-Ehresmann semigroups. Let $E$ be a subsemilattice
of $S$. Define two equivalence relations $\Rt$ and $\Lt$ in the
following way. $a\Rt b$ if $a$ and $b$ have precisely the same
set of left identities from $E$ and likewise $a\Lt b$ if they have
the same set of right identities from $E$. Assume that every $\Rt$
and $\Lt$ class contains precisely one idempotent, denoted $a^{+}$
and $a^{\ast}$ respectively. $S$ is called $E$-Ehresmann if $\Rt$
is a left congruence and $\Lt$ is a right congruence, or equivalently (see \cite[Lemma 4.1]{Gould2010}),
if the two identities $(ab)^{+}=(ab^{+})^{+}$ and $(a^{\ast}b)^{\ast}=(ab)^{\ast}$
hold for every $a,b\in S$. If $S$ is regular and $E=E(S)$ is the
set of all idempotents of $S$ then being an $E$-Ehresmann semigroup
is equivalent to being an inverse semigroup. There is also a notion
of an Ehresmann category which is a generalization of an inductive
groupoid. The Ehresmann-Schein-Nambooripad theorem generalizes well
to $E$-Ehresmann semigroups and Ehresmann categories. Lawson proved
\cite{Lawson1991} that the category of all $E$-Ehresmann semigroups
is isomorphic to the category of all Ehresmann categories. In this
paper we discuss algebras of these objects over some commutative unital ring $\mathbb{K}$. In recent years, algebras
of semigroups related to Ehresmann semigroups have been studied by
a number of authors, see \cite{Guo2012,Guo2015,Ji2016}.

Steinberg \cite{Steinberg2006} proved that if $S$ is an inverse
semigroup where $E(S)$ is finite then its algebra is isomorphic to
the algebra of the corresponding inductive groupoid. Guo and Chen
\cite{Guo2012} generalized this isomorphism to the case of finite
ample semigroups. These are the $E$-Ehresmann semigroups such that
$E=E(S)$, every $\Rs$ and $\Ls$ class contains an idempotent (see
\cite{Gould2010} for the definition of the equivalence relations
$\Rs$ and $\Ls$) and the two ample conditions $ae=(ae)^{+}a$ and
$ea=a(ea)^{\ast}$ hold for every $a\in S$ and $e\in E(S)$. An important
example of an $E$-Ehresmann semigroup is the monoid $\PT_{n}$ of
all partial functions on an $n$-element set where $E$ is the semilattice
of all partial identity functions. The author proved \cite{Stein2016}
that the algebra of $\PT_{n}$ is isomorphic to the algebra of the
category of all surjections between subsets of an $n$-element set.
The category of surjections is in fact, the Ehresmann category associated to $\PT_n$ as an $E$-Ehresmann semigroup.
This result has led to some new results on the representation theory
of $\PT_{n}$. In this paper we generalize all these results and prove
that if the subsemilattice $E\subseteq S$ is principally finite (that
is, any principal down ideal is finite) then the semigroup algebra
$\mathbb{K}S$ (over any commutative unital ring $\mathbb{K}$) is
isomorphic to the category algebra $\mathbb{K}C$ of the corresponding
Ehresmann category. In \secref{Examples} we give some examples and
simple corollaries of this isomorphism. 

In order to apply this isomorphism to study the semigroup algebra
(and representations) of some $E$-Ehresmann semigroup, one need to
understand the algebra of the corresponding Ehresmann category. Hence
it is natural to consider Ehresmann categories whose algebras
are well understood to some extent. EI-categories, which are categories where every
endomorphism is an isomorphism, are such a family. If $\mathbb{K}$
is an algebraically closed field of good characteristic then there is a way to describe
the Jacobson radical of the algebra of a finite EI-category $C$ (\cite[Proposition 4.6]{Li2011}) and
its ordinary quiver (\cite[Theorem 4.7]{Li2011} or \cite[Theorem 6.13]{Margolis2012}).
In \secref{Ehresmann-EI-categories} we characterize $E$-Ehresmann
semigroups whose corresponding Ehresmann category is an EI-category.
We give two natural families of such semigroups: $(2,1,1)$-subalgebras
of $\PT_{n}$ and $E(S)$-Ehresmann semigroups. Let $S$ be a finite $E$-Ehresmann semigroup whose corresponding category $C$ is an EI-category and let $\mathbb{K}$ be a field such that the orders of the endomorphism groups of all objects in $C$ are invertible in $\mathbb{K}$. In \secref{Max-Semisimple-Image} we prove that in this case the maximal semisimple image of $\mathbb{K}S$  can be "seen" inside the semigroup $S$ itself. More precisely, we will prove that the inverse subsemigroup of $S$ that contains all elements corresponding to isomorphisms in $C$ spans an algebra which is isomorphic to the maximal semisimple image of $\mathbb{K}S$.

\section{Preliminaries }

\subsection{$E$-Ehresmann semigroups}

We denote as usual by $\Rc$, $\Lc$, $\Dc$ and $\Hc$ the Green's
relations on a semigroup. We assume that the reader is familiar with Green's
relations and other semigroup basics that can be found in \cite{Howie1995}.
Recall that a semilattice is a commutative semigroup of idempotents,
or equivalently, a poset such that any two elements have a meet. Let
$S$ be a semigroup. Denote by $E(S)$ its set of idempotents and
choose some $E\subseteq E(S)$ such that $E$ is a subsemilattice of $S$. We
define equivalence relations $\Rt$ and $\Lt$ on $S$ by 
\[
a\Rt b\iff(\forall e\in E\quad ea=a\Leftrightarrow eb=b)
\]
and 
\[
a\Lt b\iff(\forall e\in E\quad ae=a\Leftrightarrow be=b).
\]
We also define $\Ht=\Rt\cap\Lt$. It is easy to see that $\Rc\subseteq\Rt$,
$\Lc\subseteq\Lt$ and $\Hc\subseteq\Ht$.
\begin{defn}
\label{def:EhresmannSemigroup} A semigroup $S$ with a distinguished
semilattice $E\subseteq E(S)$ is called \emph{left $E$-Ehresmann
}if the following two conditions hold.\end{defn}
\begin{enumerate}
\item \label{cond:Ehresmann1} Every $\Rt$ class contains precisely one
idempotent from $E$. 
\item \label{cond:Ehresmann2}$\Rt$ is a left congruence. \end{enumerate}
\begin{rem}
It is easy to see that an $\Rt$ class cannot contain more than one
idempotent from $E$ so \condref{Ehresmann1} can be replaced by
the requirement that every $\Rt$ class contains at least one idempotent
from $E$.  
\end{rem}
In any semigroup that satisfies \condref{Ehresmann1} we denote
by $a^{+}$ the unique idempotent from $E$ in the $\Rt$ class of
$a$. Note that $a^{+}$ is the unique minimal element $e$ of the
semilattice $E$ that satisfies $ea=a$.

Note that if $S$ is a finite monoid, \condref{Ehresmann1}
is equivalent to the requirement that $1\in E$. Indeed, $1$ is the only left identity of itself so \condref{Ehresmann1} implies that $1\in E$. On the other hand, assume $1\in E$ and take $e$ to be the product of all idempotents of $E$ which are left identity for $a$. This product is not empty since $1\in E$. It is clear that $e\in E$ and $e \Rt a$ so \condref{Ehresmann1} holds.

 \condref{Ehresmann2} of  \defref{EhresmannSemigroup} has the following equivalent characterization (for proof see \cite[Lemma 4.1]{Gould2010}). 
\begin{lem}
\label{lem:LeftRighCongIdentities}Let $S$ be a semigroup with a
distinguished semilattice $E\subseteq E(S)$ such that \condref{Ehresmann1}
of \defref{EhresmannSemigroup} holds. Then
$\Rt$ is a left congruence if and only if $(ab)^{+}=(ab^{+})^{+}$
for every $a$,$b\in S$. 
\end{lem}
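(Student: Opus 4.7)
The plan is to exploit Condition 1 of Definition 2.1, which lets me identify each $\Rt$-class with the single idempotent of $E$ it contains. In particular, for any $x \in S$, we have $x \Rt x^{+}$, and two elements $x,y$ satisfy $x \Rt y$ if and only if $x^{+} = y^{+}$. This reduction turns both implications into essentially trivial manipulations of the operator $(-)^{+}$.

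For the forward direction, assume $\Rt$ is a left congruence. Since $b \Rt b^{+}$, applying the left congruence property with any $a \in S$ yields $ab \Rt ab^{+}$. Both of these elements must therefore share the same unique idempotent of $E$ in their common $\Rt$-class, which gives $(ab)^{+} = (ab^{+})^{+}$ immediately.

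For the converse, assume the identity $(ab)^{+}=(ab^{+})^{+}$ holds for all $a,b \in S$, and suppose $b \Rt c$. By the observation above, $b^{+} = c^{+}$. Then for any $a \in S$, I would compute
\[
(ab)^{+} = (ab^{+})^{+} = (ac^{+})^{+} = (ac)^{+},
\]
where the outer equalities use the assumed identity (applied to the pairs $(a,b)$ and $(a,c)$) and the middle one uses $b^{+}=c^{+}$. This shows $ab \Rt ac$, so $\Rt$ is a left congruence.

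There is no real obstacle: the only thing to be careful about is justifying $b \Rt b^{+}$ and the characterization $x \Rt y \iff x^{+} = y^{+}$, which follow directly from Condition 1 and the definition of $x^{+}$. Everything else is a one-line substitution.
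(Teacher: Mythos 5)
Your proof is correct and is exactly the standard argument (the paper itself omits the proof, deferring to Gould's Lemma 4.1, which proceeds the same way): the identification $x\Rt y\iff x^{+}=y^{+}$, valid by Condition \ref{cond:Ehresmann1} and the fact that $x\Rt x^{+}$, reduces both implications to the one-line substitutions you give. Nothing is missing.
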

Dually, we can consider semigroups for which every $\Lt$ class contains
a unique idempotent. We denote the unique idempotent in the $\Lt$
class of $a$ by $a^{\ast}$. Such semigroup is called right $E$-Ehresmann
if $\Lt$ is a right congruence, or equivalently if $(ab)^{\ast}=(a^{\ast}b)^{\ast}$
for every $a,b\in S$. 
\begin{defn}
A semigroup $S$ with a distinguished semilattice $E\subseteq E(S)$
is called \emph{$E$-Ehresmann }if it is both left and right $E$-Ehresmann.
\end{defn}
Note that any inverse semigroup $S$ is an $E$-Ehresmann semigroup
when one choose $E=E(S)$. In this case $a^{+}=aa^{-1}$ and $a^{\ast}=a^{-1}a$.

As may be hinted by \lemref{LeftRighCongIdentities}, $E$-Ehresmann
semigroups form a variety of bi-unary semigroups, that is, semigroups with two additional binary operations. The proof of the
following proposition can be found in \cite[Lemma 2.2]{Gould2010b} and the discussion
following it. 
\begin{prop}
$E$-Ehresmann semigroups form precisely the variety of $(2,1,1)$-algebras
(where $^{+}$ and $^{\ast}$ are the unary operations) subject to
the identities: 
\begin{align*}
x^{+}x & =x,\,(x^{+}y^{+})^{+}=x^{+}y^{+},\,x^{+}y^{+}=y^{+}x^{+},\,x^{+}(xy)^{+}=(xy)^{+},\,(xy)^{+}=(xy^{+})^{+}\\
xx^{\ast} & =x,\,(x^{\ast}y^{\ast})^{\ast}=x^{\ast}y^{\ast},\:x^{\ast}y^{\ast}=y^{\ast}x^{\ast},\,(xy)^{\ast}y^{\ast}=(xy)^{\ast},\,(xy)^{\ast}=(x^{\ast}y)^{\ast}\\
x(yz) & =(xy)z,\,(x^{+})^{\ast}=x^{+},\,(x^{\ast})^{+}=x^{\ast}.
\end{align*}

\end{prop}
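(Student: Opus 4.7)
The plan is to verify both containments between the class of $E$-Ehresmann semigroups and the variety of $(2,1,1)$-algebras defined by the listed identities.

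For the forward direction, assume $S$ is $E$-Ehresmann and check each identity directly. Most are routine: associativity is the semigroup axiom; $x^+x=x$ and $xx^*=x$ restate that $x^+$ and $x^*$ are one-sided identities of $x$; the purely $E$-identities $(x^+y^+)^+=x^+y^+$, $x^+y^+=y^+x^+$, $(x^+)^*=x^+$, and $(x^*)^+=x^*$ follow from $E$ being a subsemilattice together with the fact that every element of $E$ is the unique $E$-idempotent in its own $\Rt$- and $\Lt$-class. For $x^+(xy)^+=(xy)^+$, observe that $x^+(xy)=(x^+x)y=xy$, so $x^+$ is a left $E$-identity of $xy$; by the minimality of $(xy)^+$ remarked on after \defref{EhresmannSemigroup}, $(xy)^+\le x^+$ in the semilattice order, which is exactly the identity. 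Finally, $(xy)^+=(xy^+)^+$ and its dual $(xy)^*=(x^*y)^*$ are given by \lemref{LeftRighCongIdentities}.

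For the converse, start from an abstract $(2,1,1)$-algebra satisfying the identities and build the $E$-Ehresmann structure. Define $E:=\{a^+\mid a\in S\}$, and check, using $(x^+)^*=x^+$ together with $xx^*=x$ applied to $x=a^+$, that $a^+a^+=a^+$, so $E$ consists of idempotents of $S$. The identities $(x^+)^*=x^+$ and $(x^*)^+=x^*$ then show $E=\{a^*\mid a\in S\}$, while $(x^+y^+)^+=x^+y^+$ and $x^+y^+=y^+x^+$ show closure under multiplication and commutativity, so $E$ is a subsemilattice of $E(S)$. The critical step is to verify $a\Rt a^+$ for every $a\in S$: one direction is immediate, since $ea^+=a^+$ yields $ea=e(a^+a)=(ea^+)a=a$; for the other, deduce from $ea=a$ the chain $a^+=(ea)^+=(ea^+)^+=ea^+$, using $(xy)^+=(xy^+)^+$ for the middle equality and $ea^+\in E$ together with $(x^+y^+)^+=x^+y^+$ for the last.

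With $a\Rt a^+$ in hand, uniqueness of the $E$-idempotent in each $\Rt$-class follows from commutativity of $E$: if $e,f\in E$ satisfy $e\Rt f$, then $ee=e$ forces $ef=f$, dually $fe=e$, and $ef=fe$ gives $e=f$. Thus \condref{Ehresmann1} holds, and \condref{Ehresmann2} follows immediately from \lemref{LeftRighCongIdentities} combined with the identity $(xy)^+=(xy^+)^+$. A symmetric argument on the $^*$-side supplies the right $E$-Ehresmann axioms. The main obstacle is the forward implication of $a\Rt a^+$, since it is the only step that uses the cross-identity $(xy)^+=(xy^+)^+$ jointly with $(x^+y^+)^+=x^+y^+$ in an essential way; once it is in place, the rest of the argument is bookkeeping about the semilattice $E$.
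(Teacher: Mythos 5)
Your argument is correct. Note that the paper itself gives no proof of this proposition --- it defers entirely to \cite[Lemma 2.2]{Gould2010b} and the discussion following it --- so there is no in-paper argument to compare against; what you have written is a self-contained verification of the cited fact. Both directions check out: in the forward direction the only non-routine identities are $x^{+}(xy)^{+}=(xy)^{+}$ and its dual, which you correctly derive from the observation (made in the paper right after \defref{EhresmannSemigroup}) that $a^{+}$ is the \emph{least} element of $E$ acting as a left identity on $a$, and $(xy)^{+}=(xy^{+})^{+}$, which is exactly \lemref{LeftRighCongIdentities}. In the converse direction the key computation $ea=a\Rightarrow a^{+}=(ea)^{+}=(ea^{+})^{+}=ea^{+}$ is the right one, and you correctly use $(x^{+}y^{+})^{+}=x^{+}y^{+}$ to see that $ea^{+}$ is fixed by $^{+}$ (having first checked $f^{+}f^{+}=f^{+}$ so that this identity applies). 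One small point worth making explicit if you write this up fully: before invoking \lemref{LeftRighCongIdentities} in the converse direction, you must know that the abstract operation $^{+}$ agrees with the operation ``unique $E$-idempotent in the $\Rt$-class''; your proof does establish this (via $a\Rt a^{+}$ plus uniqueness), but the logical dependence should be stated, since the lemma is phrased in terms of the latter operation.
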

One of the advantages of the varietal point of view is that one does
not need to mention the set $E$ as it is the image of the unary operations:
\[
E=\{a^{\ast}\mid a\in S\}=\{a^{+}\mid a\in S\}.
\]

Let $S$ be an inverse semigroup. It is well known that $S$ affords
a natural partial order defined by $a\leq b$ if and only if $a=aa^{-1}b$,
or equivalently, $a=ba^{-1}a$. In the general case of $E$-Ehresmann
semigroups this partial order splits into right and left versions.
We say that $a\leq_{r}b$ if and only if $a=a^{+}b$. Dually, $a\leq_{l}b$
if and only if $a=ba^{\ast}$. 
\begin{prop}[{{\cite[Section 7]{Gould2010}}}]
\end{prop}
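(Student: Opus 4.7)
The plan is to verify whatever structural properties of $\leq_r$ and $\leq_l$ the proposition asserts by working inside the $(2,1,1)$-algebra framework and applying the identities listed in the previous proposition rather than falling back on the abstract characterization of $^+$ and $^*$ via left/right identities. The argument for $\leq_l$ will be exactly dual to the one for $\leq_r$, so I would write out the details only for the right version.

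First I would establish that $\leq_r$ is a partial order. Reflexivity is immediate from $x^+x=x$, giving $a=a^+a$, hence $a\leq_r a$. For transitivity, suppose $a=a^+b$ and $b=b^+c$. Then $a=a^+b^+c$, and since $E$ is a subsemilattice, $a^+b^+\in E$. Using the identity $(xy)^+=(xy^+)^+$ to rewrite $(a^+b^+c)^+=(a^+b^+\cdot c^+)^+$, and the fact that $^+$ of an element of $E$ equals the element itself, I would identify $(a^+b^+c)^+=a^+b^+$, which lies below $a^+$ in the semilattice; combined with the uniqueness of $a^+$ in the $\Rt$-class of $a$, this forces $a=(a^+b^+c)^+\cdot c=a^+b^+c=a$, yielding $a\leq_r c$. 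For antisymmetry, $a=a^+b$ and $b=b^+a$ give, after applying $^+$ and using $(xy)^+=(xy^+)^+$, the equality $a^+=(a^+b^+)^+=a^+b^+$ and symmetrically $b^+=a^+b^+$, so $a^+=b^+$; then $a=a^+b=b^+b=b$.

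Next I would check that the restriction of $\leq_r$ (and of $\leq_l$) to $E$ recovers the usual semilattice order: for $e,f\in E$ we have $e\leq_r f$ iff $e=e^+f=ef$, which is exactly the meet-semilattice order. Any further compatibility statements, such as $a\leq_r b\Rightarrow a^+\leq b^+$ or the interaction between $\leq_r$ and multiplication on the left (since $\Rt$ is a left congruence), should fall out by the same kind of direct manipulation of the defining identities combined with the congruence properties of $\Rt$ and $\Lt$.

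The main obstacle is purely bookkeeping with the two unary operations: one must be careful not to conflate $(xy)^+$ with $x^+y^+$ in general, and to use only the listed identities. The cleanest strategy is to reduce every computation to expressions of the form $e\,a$ with $e\in E$, exploit commutativity and idempotency inside $E$, and invoke the uniqueness of $a^+$ in its $\Rt$-class only at the end to collapse two candidate idempotents into one.
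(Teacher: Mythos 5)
The paper gives no proof of this proposition --- it is quoted from Gould's survey --- so there is nothing internal to compare against; I can only assess your argument on its own terms. Your treatment of part (1) (that $\leq_r$ is a partial order) is essentially sound: reflexivity from $x^+x=x$, antisymmetry from $a^+=(a^+b^+)^+=a^+b^+=b^+a^+=b^+$, and transitivity by reducing to $a=(a^+b^+)c$ all go through using only the listed identities. One small unstated step in your transitivity computation: $(a^+b^+c)^+=(a^+b^+c^+)^+=a^+b^+c^+$, and to collapse this to $a^+b^+$ you need $b^+c^+=b^+$, which does hold because $b=b^+c$ gives $b^+=(b^+c)^+=(b^+c^+)^+=b^+c^+$; you should say this explicitly rather than appeal to ``the fact that $^+$ of an element of $E$ equals the element itself,'' which only gets you to $a^+b^+c^+$.

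The genuine gap is that you never address part (2) of the proposition, which asserts that for \emph{arbitrary} $a,b\in S$ one has $a\leq_r b$ if and only if $a=eb$ for \emph{some} $e\in E$ (not necessarily $e=a^+$). What you verify instead --- that $\leq_r$ restricted to $E$ is the semilattice order --- is a different (and weaker) statement. Part (2) is the assertion the paper actually uses later, e.g.\ to show that $\Reg_E(S)$ is a down ideal. The missing direction is short but must be written: if $a=eb$ with $e\in E$, then $a^+=(eb)^+=(eb^+)^+=eb^+$ since $eb^+\in E$, whence $a^+b=eb^+b=eb=a$, so $a\leq_r b$; the converse is trivial with $e=a^+$. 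Incidentally, proving part (2) first would also streamline your transitivity argument, since $a=(a^+b^+)c$ with $a^+b^+\in E$ then immediately yields $a\leq_r c$.
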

\begin{enumerate}
\item $\leq_{r}$ and $\leq_{l}$ are indeed partial orders on $S$. 
\item $a\leq_{r}b$ if and only if $a=eb$ for some $e\in E$. Dually, $a\leq_{l}b$
if and only if $a=be$ for some $e\in E$. 
\end{enumerate}

\subsection{Ehresmann Categories}

All categories in this paper will be \emph{small}, that is, their
morphisms form a set. Hence we can regard a category $C$, as a set
of objects, denoted $C^{0}$ and a set of morphisms, denoted $C^{1}$.
We will identify an object $e\in C^{0}$ with its identity morphism
$1_{e}$ so we can regard $C^{0}$ as a subset of $C^{1}$. We denote
the domain and range of a morphism $x\in C^{1}$ by $\db(x)$ and
$\rb(x)$ respectively. Recall that the multiplication $x\cdot y$
of two morphisms is defined if and only if $\rb(x)=\db(y)$. We also
denote the fact that $\rb(x)=\db(y)$ by $\exists x\cdot y$. Note
that in this paper we multiply morphisms (and functions) from left
to right. Recall that a \emph{groupoid} is a category where every
morphism is invertible.
\begin{defn}
A category $C$ equipped with a partial order $\leq$ on its morphisms
is called a \emph{category with order }if the following hold. 
\begin{enumerate}[label=(CO\arabic*)]
\item \label{enu:CatWithOrderLeqOfDomRan}If $x\leq y$ then $\db(x)\leq\db(y)$
and $\rb(x)\leq\rb(y)$. 
\item \label{enu:CatWithOrderCompatability}If $x\leq y$, $u\leq v$, $\exists x\cdot u$
and $\exists y\cdot v$ then $x\cdot u\leq y\cdot v$. 
\item If $x\leq y$, $\db(x)=\db(y)$ and $\rb(x)=\rb(y)$ then $x=y$. 
\end{enumerate}
\end{defn}

\begin{defn}
A category $C$ equipped with two partial orders on morphisms $\leq_{r}$,
$\leq_{l}$ is called an \emph{Ehresmann category} if the following
hold: 
\begin{enumerate}[label=(EC\arabic*)]
\item $C$ equipped with $\leq_{r}$ (respectively, $\leq_{l}$) is a category
with order. 
\item \label{enu:EhresmannCatrestriction}If $x\in C^{1}$ and $e\in C^{0}$
with $e\leq_{r}\db(x)$ then there exists a unique \emph{restriction}
$(e\mid x)\in C^{1}$ satisfying $\db((e\mid x))=e$ and $(e\mid x)\leq_{r}x$. 
\item If $x\in C^{1}$ and $e\in C^{0}$ with $e\leq_{l}\rb(x)$ then there
exists a unique \emph{co-restriction} $(x\mid e)\in C^{1}$ satisfying
$\rb((x\mid e))=e$ and $(x\mid e)\leq_{l}x$. 
\item For $e,f\in C^{0}$ we have $e\leq_{r}f$ if and only if $e\leq_{l}f$.\label{enu:PartialOrdersEqualOnObjects} 
\item $C^{0}$ is a semilattice with respect to $\leq_{r}$ (or $\leq_{l}$,
since they are equal on $C^{0}$ by \ref{enu:PartialOrdersEqualOnObjects}). 
\item $\leq_{r}\circ\leq_{l}=\leq_{l}\circ\leq_{r}$. 
\item \label{enu:EhresmannCatWedgeCorestriction}If $x\leq_{r}y$ and $f\in C^{0}$
then $(x\mid\rb(x)\wedge f)\leq_{r}(y\mid\rb(y)\wedge f)$. 
\item If $x\leq_{l}y$ and $f\in C^{0}$ then $(\db(x)\wedge f\mid x)\leq_{l}(\db(y)\wedge f\mid y)$. 
\end{enumerate}
\end{defn}
\begin{rem}
Note that for every morphism $x$ of an Ehresmann category we have
$(x\mid\rb(x))=x=(\db(x)\mid x)$. 
\end{rem}
From every $E$-Ehresmann semigroup $S$ we can construct an Ehresmann
category ${\bf C}(S)=C$ in the following way. The object set of ${\bf C}(S)$
is the set $E$ and morphisms of ${\bf C}(S)$ are in one-to-one correspondence
with elements of $S$. For every $a\in S$ we associate a morphism
$C(a)\in C^{1}$ such that $\db(C(a))=a^{+}$ and $\rb(C(a))=a^{\ast}$.
If $\exists C(a)\cdot C(b)$ then $C(a)\cdot C(b)=C(ab)$. Finally
$C(a)\leq_{r}C(b)$ ($C(a)\leq_{l}C(b)$) whenever $a\leq_{r}b$ (respectively,
$a\leq_{l}b$) according to the partial order of $S$ defined above. 

\begin{prop}[{{\cite[Proposition 4.1]{Lawson1991}}}]
${\bf C}(S)$ constructed as above equipped with $\leq_{r}$, $\leq_{l}$
is indeed an Ehresmann category. 
\end{prop}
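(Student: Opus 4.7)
The plan is to verify each of the eight Ehresmann-category axioms directly, invoking only the $(2,1,1)$-identities listed in the preliminaries together with the commutativity of $E$.

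First, I would confirm that $\mathbf{C}(S)$ really is a category. The identity at $e\in E$ is $C(e)$, since any idempotent $e\in E$ satisfies $e^{+}=e^{*}=e$ (taking $y=x$ in $(x^{+}y^{+})^{+}=x^{+}y^{+}$, combined with $(x^{+})^{*}=x^{+}$). Composition is well defined: whenever $a^{*}=b^{+}$, the identity $(ab)^{+}=(ab^{+})^{+}$ gives $(ab)^{+}=(aa^{*})^{+}=a^{+}$, and dually $(ab)^{*}=b^{*}$, so $C(a)\cdot C(b)=C(ab)$ has the correct endpoints. Associativity transfers from $S$, and the unit laws follow from $a^{+}a=a=aa^{*}$.

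Next, for \ref{enu:CatWithOrderLeqOfDomRan}--\ref{enu:CatWithOrderCompatability} applied to $\leq_{r}$ (the $\leq_{l}$ case being dual): \ref{enu:CatWithOrderLeqOfDomRan} requires $a^{+}\leq b^{+}$ and $a^{*}\leq b^{*}$ when $a=a^{+}b$; for the second, $ab^{*}=a^{+}bb^{*}=a^{+}b=a$ gives $a^{*}b^{*}=(ab^{*})^{*}=a^{*}$, and the first is similar. \ref{enu:CatWithOrderCompatability} is the compatibility of semigroup multiplication with $\leq_{r}$ and $\leq_{l}$, an immediate consequence of $(xy)^{+}=(xy^{+})^{+}$, and the antisymmetry clause is immediate from $a=a^{+}b=b^{+}b=b$ under the stated hypotheses. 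For \ref{enu:EhresmannCatrestriction}, given $e\leq_{r}a^{+}$, I would set $(e\mid C(a)):=C(ea)$; then $(ea)^{+}=(ea^{+})^{+}=e^{+}=e$ and $ea=(ea)^{+}\cdot a$, with uniqueness forced by the antisymmetry clause. The co-restriction axiom is handled dually by setting $(C(a)\mid g):=C(ag)$ for $g\leq_{l}a^{*}$. Axiom \ref{enu:PartialOrdersEqualOnObjects} follows from commutativity of $E$: for $e,f\in E$, both $e\leq_{r}f$ and $e\leq_{l}f$ reduce to $ef=e$, and the semilattice axiom on $C^{0}$ is immediate.

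The substantive content lies in the remaining axioms. For the commutation of the two orders, given $a\leq_{r}b\leq_{l}c$, I would set $d:=a^{+}c$; then $(a^{+}c)^{+}=(a^{+}c^{+})^{+}=a^{+}c^{+}$, whence $d^{+}c=a^{+}c^{+}c=d$, showing $d\leq_{r}c$, while using $b=cb^{*}$ together with $a^{*}\leq b^{*}$ (so $b^{*}a^{*}=a^{*}$) yields $da^{*}=a^{+}ca^{*}=a^{+}cb^{*}a^{*}=aa^{*}=a$, giving $a\leq_{l}d$; the reverse inclusion is proved symmetrically with $d:=ca^{*}$. For \ref{enu:EhresmannCatWedgeCorestriction}, the co-restriction $(C(a)\mid a^{*}\wedge f)$ equals $C(a\cdot a^{*}f)=C(af)$ since $aa^{*}=a$ and $a^{*}\wedge f=a^{*}f$; if $a\leq_{r}b$ then $af=a^{+}bf=a^{+}(bf)$ exhibits $af\leq_{r}bf$ via the characterization $x\leq_{r}y\iff x=ey$ for some $e\in E$. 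The final axiom is dual.

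The main obstacle is the commutation of the two orders, where the witness $d=a^{+}c$ (respectively $d=ca^{*}$) must be guessed and then verified by a chain of identity manipulations that simultaneously uses both the $^{+}$- and $^{*}$-identities together with commutativity of $E$; everything else is routine bookkeeping with the defining identities.
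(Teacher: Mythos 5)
The paper does not prove this proposition at all: it is imported verbatim as \cite[Proposition 4.1]{Lawson1991}, so there is no in-paper argument to compare yours against. Your direct, axiom-by-axiom verification is essentially the standard (Lawson-style) proof, and the computations check out: the category structure, (CO1)--(CO3), the restriction/co-restriction formulas $(e\mid C(a))=C(ea)$ and $(C(a)\mid g)=C(ag)$, the agreement of $\leq_r$ and $\leq_l$ on $E$, and axioms (EC7)--(EC8) are all handled correctly, and your witness $d=a^{+}c$ (resp.\ $d=ca^{*}$) for $\leq_r\circ\leq_l=\leq_l\circ\leq_r$ is exactly the right one, verified by a valid chain of identities using $a^{*}=b^{*}a^{*}$ from (CO1). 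One small misattribution: uniqueness of the restriction does \emph{not} follow from the antisymmetry clause (CO3), which would require the two candidate restrictions to be $\leq_r$-comparable to each other; it follows instead directly from the definition of $\leq_r$ in $S$, since any $b\leq_r a$ with $b^{+}=e$ satisfies $b=b^{+}a=ea$ and is therefore already determined. Similarly, your justification of (CO2) as an ``immediate consequence'' of $(xy)^{+}=(xy^{+})^{+}$ glosses over the short computation $ac=aa^{*}d=ad=a^{+}(bd)$, which uses the composability hypotheses $a^{*}=c^{+}$ and the characterization $u\leq_r v\iff u=ev$ for some $e\in E$; spelling this out would make the step airtight. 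Neither point is a genuine gap.
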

The other direction is also possible. Given an Ehresmann category
$C$ we can construct an $E$-Ehresmann semigroup ${\bf S}(C)=S$
in the following way. The elements of $S$ are in one-to-one correspondence
with morphisms of $C$, for every $x\in C^{1}$ we associate an element
$S(x)\in S$. The distinguished semilattice is $E=\{S(x)\mid x\text{ is an identity morphism}\}$.
Note that $\leq_{r}=\leq_{l}$ on $E$ so we can denote the common
meet operation on $E$ simply by $\wedge$. The multiplication of
$S$ is defined by 
\begin{equation}
S(x)\cdot S(y)=S((x\mid\rb(x)\wedge\db(y))\cdot(\rb(x)\wedge\db(y)\mid y)).\label{eq:EhresmannProduct}
\end{equation}

\begin{rem}
\label{rem:EhresmannProduct}Note that if $\exists x\cdot y$ then
$S(x)\cdot S(y)=S(xy)$.\end{rem}
\begin{prop}[{{\cite[Theorem 4.21]{Lawson1991}}}]
\label{prop:ConstructionOfEhresmannSemigroup}${\bf S}(C)$ constructed
above is indeed an $E$-Ehresmann semigroup where for every $x\in C^{1}$
we have $(S(x))^{+}=S(\db(x))$ and $(S(x))^{\ast}=S(\rb(x))$. 
\end{prop}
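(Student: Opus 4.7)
The plan is to verify sequentially that the formula \eqref{eq:EhresmannProduct} defines a well-defined associative binary operation, that $E=\{S(1_e)\mid e\in C^0\}$ is a subsemilattice isomorphic to $(C^0,\wedge)$, that $(S(x))^+=S(\db(x))$ and $(S(x))^*=S(\rb(x))$ are the unique $E$-idempotents in the $\Rt$- and $\Lt$-classes of $S(x)$, and finally that the identity of \lemref{LeftRighCongIdentities} and its dual hold so that $\Rt$ is a left congruence and $\Lt$ is a right congruence.

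Well-definedness of \eqref{eq:EhresmannProduct} is immediate: for any $x,y\in C^{1}$, the meet $p=\rb(x)\wedge\db(y)$ satisfies $p\leq_l\rb(x)$ and $p\leq_r\db(y)$ since $\leq_r$ and $\leq_l$ coincide on $C^0$, so the co-restriction $(x\mid p)$ and restriction $(p\mid y)$ both exist and their range and domain match at $p$. For $e,f\in C^{0}$, the uniqueness clauses in the definitions of (co-)restriction force $(1_e\mid e\wedge f)=1_{e\wedge f}=(e\wedge f\mid 1_f)$, whence $S(1_e)\cdot S(1_f)=S(1_{e\wedge f})$, so $E$ is a subsemilattice of idempotents. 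The identification $(S(x))^+=S(\db(x))$ follows by showing that $S(1_e)\cdot S(x)=S(x)$ if and only if $\db(x)\leq e$: if $\db(x)\leq e$ then $e\wedge\db(x)=\db(x)$ and the two (co-)restrictions in \eqref{eq:EhresmannProduct} collapse to $1_{\db(x)}$ and $x$; conversely, the domain of the morphism produced by \eqref{eq:EhresmannProduct} is $e\wedge\db(x)$, which must equal $\db(x)$. Hence the left $E$-identities of $S(x)$ and of $S(\db(x))$ coincide, so $S(x)\,\Rt\,S(\db(x))$, and $S(\db(x))$ is the unique idempotent of $E$ in this class. The dual argument gives $(S(x))^*=S(\rb(x))$.

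The main obstacle is associativity. Given $x,y,z\in C^{1}$, the strategy is to reduce both $(S(x)\cdot S(y))\cdot S(z)$ and $S(x)\cdot(S(y)\cdot S(z))$ to a common expression of the form $S(u\cdot v\cdot w)$, where $u,v,w$ are successive (co-)restrictions of $x,y,z$ whose consecutive ranges and domains already agree inside $C$. The tools for this reduction are the axioms that commute the wedge past (co-)restrictions; the commutation condition $\leq_r\circ\leq_l=\leq_l\circ\leq_r$ together with uniqueness of (co-)restrictions, which lets one interchange the order in which a restriction and a co-restriction are applied to a morphism; and the observation $(x\mid\rb(x))=x=(\db(x)\mid x)$, which makes trivial (co-)restrictions disappear. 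Once both bracketings have been brought to the form $S(u\cdot v\cdot w)$, associativity of composition in $C$ combined with \remref{EhresmannProduct} yields the equality.

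For the congruence conditions, we appeal to \lemref{LeftRighCongIdentities}. Writing $e=\rb(a)\wedge\db(b)$, the product $S(a)\cdot S(b)=S((a\mid e)\cdot(e\mid b))$ has $\db$ equal to $\db((a\mid e))$, while $S(a)\cdot S(b)^+=S(a)\cdot S(\db(b))$ reduces, using $(e\mid 1_{\db(b)})=1_e$ by uniqueness, to $S((a\mid e))$, which has the same $\db$. Therefore $(S(a)\cdot S(b))^+=S(\db((a\mid e)))=(S(a)\cdot S(b)^+)^+$, so $\Rt$ is a left congruence; the right congruence of $\Lt$ follows by duality.
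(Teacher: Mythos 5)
The paper does not actually prove this proposition; it is quoted from Lawson's paper (his Theorem 4.21), so there is no in-paper argument to compare against. Measured against what a complete proof requires, your proposal gets the easy components right and in essentially the standard order: well-definedness of the product \eqref{eq:EhresmannProduct}, the identification of $E$ with $(C^{0},\wedge)$ via $(1_{e}\mid e\wedge f)=1_{e\wedge f}=(e\wedge f\mid 1_{f})$, and the computation showing that $S(1_{e})\cdot S(x)=S(x)$ iff $\db(x)\leq e$, which correctly yields $(S(x))^{+}=S(\db(x))$ and \condref{Ehresmann1}. The left-congruence verification via \lemref{LeftRighCongIdentities} is also correct: both $(S(x)\cdot S(y))^{+}$ and $(S(x)\cdot S(y)^{+})^{+}$ come out as $S(\db((x\mid p)))$ with $p=\rb(x)\wedge\db(y)$.

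The genuine gap is associativity, which is the technical heart of the theorem and is the part you only describe rather than prove. Two distinct lemmas are needed there. The first, distributivity of (co-)restriction over composition, e.g.\ $(e\mid x\cdot y)=(e\mid x)\cdot(\rb((e\mid x))\mid y)$, is in fact a quick consequence of \ref{enu:CatWithOrderCompatability} and uniqueness of restriction, and you could have stated and dispatched it. The second is the interchange lemma: when both bracketings of $S(x)\cdot S(y)\cdot S(z)$ are expanded, the middle factor appears once as a co-restriction of a restriction of $y$ and once as a restriction of a co-restriction of $y$, at objects that are themselves only known to agree after applying \ref{enu:EhresmannCatWedgeCorestriction}, its dual, and $\leq_{r}\circ\leq_{l}=\leq_{l}\circ\leq_{r}$. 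Showing these two ``bi-restrictions'' coincide, and that the outer factors then match up, is exactly where Lawson expends most of his effort; naming the axioms as ``tools'' and asserting that both sides reduce to a common $S(u\cdot v\cdot w)$ does not discharge it. As written, the proposal is a correct plan with the decisive step left unexecuted, so it does not yet constitute a proof.
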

The functions ${\bf C}$ and ${\bf S}$ are actually functors, moreover,
they are isomorphisms of categories. In order to state this theorem
accurately we need another definition. 
\begin{defn}
A functor $F:C\to D$ between two Ehresmann categories is called inductive
if the following hold: 
\begin{enumerate}
\item For every $x,y\in C^{1}$ we have that $x\leq_{r}y$ implies $F(x)\leq_{r}F(y)$
and $x\leq_{l}y$ implies $F(x)\leq_{l}F(y)$. 
\item $F(e\wedge f)=F(e)\wedge F(f)$ for every $e,f\in C^{0}$. 
\end{enumerate}
\end{defn}
In the following theorem, by a homomorphism of Ehresmann semigroups
we mean a $(2,1,1)$-algebra homomorphism, that is, a function that
preserves also the unary operations. 
\begin{thm}[{{\cite[Theorem 4.24]{Lawson1991}}}]
\label{thm:EhresmannIsoTheorem} The category of all $E$-Ehresmann
semigroups and homomorphisms is isomorphic to the category of all
Ehresmann categories and inductive functors. The isomorphism being
given by the functors ${\bf S}$ and ${\bf C}$ defined above.\end{thm}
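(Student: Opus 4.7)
The plan is to verify that ${\bf C}$ and ${\bf S}$ assemble into mutually inverse functors between the two categories. Four things need to be checked: (a) both constructions are well-defined on objects, which is already supplied by the preceding propositions; (b) a $(2,1,1)$-homomorphism $\varphi\colon S\to T$ induces an inductive functor ${\bf C}(\varphi)$ sending $e\in E$ to $\varphi(e)$ and $C(a)$ to $C(\varphi(a))$; (c) an inductive functor $F\colon C\to D$ induces a $(2,1,1)$-homomorphism ${\bf S}(F)$ sending $S(x)$ to $S(F(x))$; and (d) the two compositions are the identity under the natural bijections from (a).

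Point (b) is routine: since $\varphi$ preserves $+$, $\ast$ and multiplication, it respects domains, ranges and composability of the corresponding morphisms, while the orders $\leq_r$, $\leq_l$ are defined in terms of $+$, $\ast$ and multiplication alone; meets are preserved because $\varphi|_E$ is a semilattice homomorphism. For (c), the central observation is that an inductive functor automatically preserves restrictions and co-restrictions: if $e\leq_l \rb(x)$ then $F((x\mid e))$ has range $F(e)$ and lies below $F(x)$ in $\leq_l$, so the uniqueness clause in the co-restriction axiom forces $F((x\mid e))=(F(x)\mid F(e))$. Combined with preservation of meets, formula \eqref{eq:EhresmannProduct} then shows ${\bf S}(F)$ is multiplicative, while preservation of $+,\ast$ is immediate from the fact that $F$ sends identities to identities.

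The substantive step is (d), and the harder half is ${\bf S}\circ{\bf C}\cong \id_S$. The underlying bijection is $a\leftrightarrow S(C(a))$, and it visibly preserves $+,\ast$ because $\db(C(a))=a^+$ and $\rb(C(a))=a^\ast$. What must be verified is that \eqref{eq:EhresmannProduct} reproduces the product of $S$. To this end I would first establish the explicit identifications
\[
(e\mid C(b))=C(eb)\text{ when }e\leq b^+,\qquad (C(a)\mid e)=C(ae)\text{ when }e\leq a^\ast,
\]
by checking that the proposed right-hand sides satisfy the defining uniqueness properties of the (co-)restrictions; this uses only $(eb)^+=e$ and $(ae)^\ast=e$, both immediate from the Ehresmann identities. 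Specializing to $e=a^\ast\wedge b^+=a^\ast b^+$ turns \eqref{eq:EhresmannProduct} into $S(C(ab^+)\cdot C(a^\ast b))=S(C(ab^+\cdot a^\ast b))$, and the computation
\[
ab^+\cdot a^\ast b = a(b^+a^\ast)b = a(a^\ast b^+)b = (aa^\ast)(b^+b) = ab
\]
closes the loop. The other half ${\bf C}\circ{\bf S}\cong \id_C$ is easier: the bijections $e\leftrightarrow S(e)$ on objects and $x\leftrightarrow C(S(x))$ on morphisms respect composition directly by \remref{EhresmannProduct}, and the partial orders match by construction.

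The main obstacle is the identification of restrictions and co-restrictions inside ${\bf C}(S)$ with the elements $eb$ and $ae$ of $S$; once that is pinned down, the multiplication check and the functoriality statements both reduce to unwinding definitions, and the four points (a)--(d) together yield the isomorphism of categories.
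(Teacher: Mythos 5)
This statement is quoted from Lawson (\cite[Theorem 4.24]{Lawson1991}) and the paper offers no proof of it --- it is used as a black box, with the action of ${\bf S}$ and ${\bf C}$ on morphisms explicitly ``neglected'' --- so there is no in-paper argument to compare against. Judged on its own, your outline is a sound reconstruction of the standard proof. The two load-bearing observations you isolate are exactly the right ones: the identification of restrictions and co-restrictions in ${\bf C}(S)$ with the elements $eb$ and $ae$ (which the paper records separately as Lemma~\ref{lem:RestrictionCoRestrictionElements}, proved the same way you propose, via uniqueness of restriction), and the computation $ab^{+}\cdot a^{\ast}b=aa^{\ast}b^{+}b=ab$ (which reappears verbatim in Case 2 of the paper's proof of Theorem~\ref{thm:MainThm}). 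Your argument that an inductive functor automatically preserves (co-)restrictions, via the uniqueness clause of \ref{enu:EhresmannCatrestriction}, is also correct and is the crux of step (c). Two points are glossed a little quickly: in (d) you should note that composability of $C(ab^{+})$ and $C(a^{\ast}b)$ follows from $(ab^{+})^{\ast}=a^{\ast}b^{+}=(a^{\ast}b)^{+}$; and the claim that ``the partial orders match by construction'' in ${\bf C}\circ{\bf S}\cong\id_{C}$ does need a short argument --- one checks that $S(x)=S(\db(x))\cdot S(y)$ holds iff $x=(\db(x)\wedge\db(y)\mid y)$ iff $x\leq_{r}y$, again by uniqueness of restriction. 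Neither gap is serious.
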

\begin{rem}
We neglect the description of the operation of ${\bf S}$ and ${\bf C}$
on morphisms since it will be inessential in the sequel. 
\end{rem}
Let $S$ be an $E$-Ehresmann semigroup and let $C={\bf C(}S)$ be
the associated Ehresmann category (hence $S={\bf S}(C)$ by \thmref{EhresmannIsoTheorem}).
Some points about the correspondence between $S$ and $C$ are worth
mentioning. We will continue to denote by $C(a)$ the morphism in
$C$ associated to some $a\in S$ and likewise $S(x)$ is the element
of $S$ associated to some $x\in C^{1}$. In particular, $S(C(a))=a$
and $C(S(x))=x$. Two partial orders denoted by $\leq_{r}$ were defined
above, one on $S$ and one on $C^{1}$. Since $a\leq_{r}b$ if and
only if $C(a)\leq_{r}C(b)$ we can identify these partial orders so
the identical notation is justified. A dual remark holds for $\leq_{l}$.
The next lemma identifies the elements of $S$ corresponding to restriction
and co-restriction. 
\begin{lem}
\label{lem:RestrictionCoRestrictionElements} Let $a\in S$ and $e\in E$
then 
\[
C(ea)=(C(ea^{+})\mid C(a))
\]
\[
C(ae)=(C(a)\mid C(ea^{\ast})).
\]
\end{lem}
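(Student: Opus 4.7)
The plan is to use the uniqueness clause in axiom \ref{enu:EhresmannCatrestriction}: the restriction $(C(ea^+)\mid C(a))$ is the only morphism of $C$ whose domain is $C(ea^+)$ and which lies $\leq_r C(a)$. Therefore it suffices to verify that $C(ea)$ has both of these properties. The whole argument is a short manipulation once one identifies $(ea)^+$ correctly using the Ehresmann identity from \lemref{LeftRighCongIdentities}; the dual statement will follow by symmetry.

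For the domain condition, \propref{ConstructionOfEhresmannSemigroup} gives $\db(C(ea)) = C((ea)^+)$, so I need to show $(ea)^+ = ea^+$. Applying $(xy)^+ = (xy^+)^+$ with $x=e$, $y=a$, and using that $ea^+ \in E$ (since $E$ is closed under the semilattice product) and hence $(ea^+)^+ = ea^+$, we obtain
\[
(ea)^+ = (ea^+)^+ = ea^+,
\]
as required. For the order condition, since the partial orders on $S$ and on $C^1$ are identified, it is enough to check that $ea \leq_r a$ in $S$, i.e. $ea = (ea)^+ a$. Using what was just proved,
\[
(ea)^+ a = (ea^+)a = e(a^+ a) = ea.
\]
As a byproduct we also see that $C(ea^+) = \db(C(ea)) \leq_r \db(C(a)) = C(a^+)$ by \ref{enu:CatWithOrderLeqOfDomRan}, which confirms that the restriction $(C(ea^+)\mid C(a))$ is in fact defined. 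By the uniqueness clause of \ref{enu:EhresmannCatrestriction}, $C(ea) = (C(ea^+)\mid C(a))$.

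The second equation is proved by the dual argument: using $(xy)^* = (x^*y)^*$ with $x=a$, $y=e$ gives $(ae)^* = (a^*e)^* = a^*e = ea^*$, and then
\[
a(ae)^* = a(a^*e) = (aa^*)e = ae,
\]
so $ae \leq_l a$ with $\rb(C(ae)) = C(ea^*)$. By the uniqueness of co-restrictions, $C(ae) = (C(a)\mid C(ea^*))$. The only non-routine step in the whole proof is the initial identification $(ea)^+ = ea^+$ via the Ehresmann identity; once this is in hand, everything else is a one-line verification.
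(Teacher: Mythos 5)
Your proof is correct and follows exactly the paper's argument: show $(ea)^{+}=(ea^{+})^{+}=ea^{+}$ so that $\db(C(ea))=C(ea^{+})$, check $ea\leq_{r}a$, and invoke the uniqueness clause of \ref{enu:EhresmannCatrestriction}, with the dual argument for co-restriction. You merely spell out a few steps the paper labels as clear (e.g.\ verifying $ea=(ea)^{+}a$ directly rather than citing $ea\leq_{r}a\iff ea=fa$ for some $f\in E$), which is fine.
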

\begin{proof}
It is clear that $ea\leq_{r}a$ so $C(ea)\leq_{r}C(a)$. Moreover,
$(ea)^{+}=(ea^{+})^{+}=ea^{+}$. So $\db(C(ea))=C(ea^{+})$. By \enuref{EhresmannCatrestriction},
$(C(ea^{+})\mid C(a))$ is the unique morphism with these two properties
so the desired equality follows. The proof for $ae$ is similar. 
\end{proof}

\subsection{M\"{o}bius functions}

Let $(X,\leq)$ be a locally finite poset and let $\mathbb{K}$ be a commutative unital ring. Recall that locally finite
means that all the intervals $[x,y]=\{z\mid x\leq z\leq y\}$ are
finite.\textcolor{black}{{} We view $\leq$ as a set of ordered pairs.}
The \emph{M\"{o}bius function} of $\leq$ is a function $\mu:\leq\to\mathbb{K}$
that can be defined in the following recursive way: 
\[
\mu(x,x)=1
\]
\[
\mu(x,y)=-\sum_{x\leq z<y}\mu(x,z)
\]

\begin{thm}[M\"{o}bius inversion theorem]

Let $G$ be an abelian group and let $f,g:X\to G$ be functions such
that 
\[
g(x)=\sum_{y\leq x}f(y)
\]

then 
\[
f(x)=\sum_{y\leq x}\mu(y,x)g(y).
\]
\end{thm}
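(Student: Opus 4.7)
The plan is to induct on the cardinality of the principal down-set $\{y \in X : y \leq x\}$, which must be finite in order for the sums in the statement to make sense at all. The base case is when $x$ is minimal in $X$, so that $g(x) = f(x)$ and the claimed sum reduces to $\mu(x,x)g(x) = g(x)$.

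For the inductive step, rewrite the hypothesis as $f(x) = g(x) - \sum_{y < x} f(y)$. For each $y < x$ the down-set of $y$ is strictly smaller than that of $x$ (it misses $x$ itself), so by induction $f(y) = \sum_{z \leq y} \mu(z,y) g(z)$. Substituting and swapping the order of summation, the coefficient of $g(z)$ for $z < x$ becomes $-\sum_{z \leq y < x} \mu(z,y)$, and by the very definition of $\mu$ this equals $\mu(z,x)$. Including the $z = x$ term $\mu(x,x)g(x) = g(x)$ reassembles the right-hand side $\sum_{z \leq x} \mu(z,x)g(z)$, as required.

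The main thing to keep track of is the interchange of summations, which is routine once one notes that $z \leq y < x$ forces $z < x$. The recursive definition of $\mu$ is tailored precisely so that the inner sum collapses to $\mu(z,x)$, so no serious obstacle arises: the argument is essentially a bookkeeping exercise. An alternative route through the incidence algebra (realising $\mu$ as the two-sided inverse of the zeta function) would also work, but establishing the two-sided inverse amounts to the same calculation, so the direct induction seems cleanest.
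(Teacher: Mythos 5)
The paper does not actually prove this statement: it is quoted as standard background on M\"obius functions, with the reader referred to Stanley's book, so there is no in-paper argument to compare against. Your proof is correct and is the standard one. The strong induction on the cardinality of the principal down-set $\{y : y\leq x\}$ is well-founded because for $y<x$ that down-set is contained in the down-set of $x$ by transitivity and omits $x$ itself, hence is strictly smaller; the interchange of summation is legitimate over these finite index sets; and the inner sum $-\sum_{z\leq y<x}\mu(z,y)$ collapses to $\mu(z,x)$ exactly by the recursive definition of $\mu$ given in the paper. You are also right to flag that the argument needs principal down-sets to be finite rather than mere local finiteness of intervals --- this is precisely the ``principally finite'' hypothesis the paper imposes on $E$ (and hence on $S$) before invoking the theorem, so no gap arises in the context where it is used.
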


More on M\"{o}bius functions can be found in \cite[Chapter 3]{Stanley1997}.

\section{Isomorphism of algebras}

Throughout this section we let $S$ denote an $E$-Ehresmann semigroup and $C$ denote the corresponding Ehresmann category. For the sake of simplicity, we set $\leq=\leq_{r}$. From now on we
assume that for any $e \in E$ the set $\{f\in E\mid f\leq e\}$ is finite. This clearly implies that for every  $a\in S$ the set $\{b\in S\mid b\leq a\}$ is
finite. In this section we will prove that the semigroup algebra of $S$ over a commutative ring $\mathbb{K}$ with identity is
isomorphic to the algebra of $C$ over $\mathbb{K}$. This result is a generalization
of \cite[Theorem 4.2]{Steinberg2006} where it was proved for inverse
semigroups and inductive groupoids and of \cite[Theorem 4.2]{Guo2012}
where it was proved for ample semigroups. This also generalizes \cite[Proposition 3.2]{Stein2016}
where this isomorphism was proved for the special case $S=\PT_{n}$ (actually $\PT_n^{\op}$, since there the composition is from right to left).
We start by recalling the definition of an algebra of a semigroup or a category. 
\begin{defn}
Let $S$ be a semigroup. The\emph{ semigroup algebra } $\mathbb{K}S$
is the free $\mathbb{K}$-module with basis the elements of the semigroup.
In other words, as a set $\mathbb{K}S$ is all the formal linear combinations
\[
\{k_{1}s_{1}+\ldots+k_{n}s_{n}\mid k_{i}\in\mathbb{K},\,s_{i}\in S\}
\]
with multiplication being linear extension of the semigroup multiplication. 
\end{defn}

\begin{defn}
Let $C$ be a category. The \emph{category algebra} $\mathbb{K}C$
is the free $\mathbb{K}$-module with basis the morphisms of the category.
In other words, as a set $\mathbb{K}C$ is all formal linear combinations
\[
\{k_{1}x_{1}+\ldots+k_{n}x_{n}\mid k_{i}\in\mathbb{K},\,x_{i}\in C^{1}\}
\]
with multiplication being linear extension of 
\[
x\cdot y=\begin{cases}
xy & \exists x\cdot y\\
0 & \text{otherwise.}
\end{cases}
\]
\end{defn}
\begin{rem}
We use the word "algebra" in two different meaning in this paper, in
the sense of ring theory as in the above definitions and
in the sense of universal algebra. However, when we use it in the
later sense we always mention the signature: $(2,1,1)$-algebras,
$(2,1)$-subalgebras etc. Hence no ambiguity should arise.\end{rem}
\begin{thm}
\label{thm:MainThm}Let $S$ be an $E$-Ehresmann semigroup and denote
$C={\bf C}(S)$. Then $\mathbb{K}S$ is isomorphic to $\mathbb{K}C$.
Explicit isomorphisms $\varphi:\mathbb{K}S\rightarrow\mathbb{K}C$,
$\psi:\mathbb{K}C\rightarrow\mathbb{K}S$ are defined (on basis elements)
by 
\[
\varphi(a)=\sum_{b\leq a}C(b)
\]

\[
\psi(x)=\sum_{y\leq x}\mu(y,x)S(y)
\]

where $\mu$ is the M\"{o}bius function of the poset $\leq$. 
\end{thm}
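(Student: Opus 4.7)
The proof splits into two parts: showing that $\varphi$ and $\psi$ are mutually inverse $\mathbb{K}$-module maps, and showing that $\varphi$ preserves multiplication. Both maps are well-defined $\mathbb{K}$-linear on the prescribed bases, since the hypothesis that every principal down-ideal in $E$ is finite forces each sum $\sum_{b\leq a}$ to be finite. Mutual inverseness is an immediate consequence of M\"obius inversion: using the defining relation $\sum_{y\leq b\leq a}\mu(y,b)=\delta_{y,a}$, one computes
\[
\psi\varphi(a)\;=\;\sum_{b\leq a}\sum_{y\leq b}\mu(y,b)S(y)\;=\;\sum_{y\leq a}\Biggl(\sum_{y\leq b\leq a}\mu(y,b)\Biggr)S(y)\;=\;a,
\]
and symmetrically (using the dual M\"obius identity) $\varphi\psi(x)=x$.

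To show $\varphi$ is multiplicative I expand $\varphi(a)\varphi(b)$ on basis elements. In $\mathbb{K}C$ the product $C(c)\cdot C(d)$ equals $C(cd)$ if $c^{*}=d^{+}$ (by Remark \ref{rem:EhresmannProduct}) and vanishes otherwise. Therefore
\[
\varphi(a)\varphi(b)\;=\;\sum_{\substack{c\leq a,\ d\leq b\\ c^{*}=d^{+}}}C(cd),\qquad \varphi(ab)\;=\;\sum_{e\leq ab}C(e),
\]
so the claim reduces to producing a bijection between the two indexing sets sending $(c,d)\mapsto cd=e$. For any admissible pair, writing $c=c^{+}a$, $d=d^{+}b$ and using $c^{*}=d^{+}$ together with $xx^{*}=x$,
\[
cd\;=\;c^{+}a\cdot c^{*}\cdot b\;=\;c^{+}a\cdot b\;=\;c^{+}(ab),
\]
which shows $cd\leq ab$. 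Because $C(c)\cdot C(d)=C(cd)$ is a genuine composition in $C$ with domain $c^{+}$, one has $(cd)^{+}=c^{+}$; combined with $cd=c^{+}(ab)$ this forces $c^{+}\leq (ab)^{+}$. So the admissible pairs are parametrised by those $f\in E$ with $f\leq a^{+}$ and $(fa)^{*}\leq b^{+}$ (via $c=fa$, $d=(fa)^{*}b$), and $(c,d)\mapsto cd$ becomes $f\mapsto fab$. On the other side, $e\leq ab$ is parametrised uniquely by $f:=e^{+}\leq (ab)^{+}$ via $e=fab$. Thus the problem reduces to the set equality
\[
\{f\in E:f\leq a^{+},\ (fa)^{*}\leq b^{+}\}\;=\;\{f\in E:f\leq (ab)^{+}\}.
\]

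The inclusion $\subseteq$ is short: if $f\leq a^{+}$ and $(fa)^{*}\leq b^{+}$, then $C(fa)\cdot C((fa)^{*}b)=C(fab)$ is a legitimate composition in $C$ whose domain is $f$, so $(fab)^{+}=f\leq(ab)^{+}$. The reverse inclusion is the heart of the argument and the main obstacle. Given $f\leq(ab)^{+}$, I apply the pseudoproduct decomposition $C(ab)=(C(a)\mid m)\cdot(m\mid C(b))$ with $m=a^{*}\wedge b^{+}$ and distribute the restriction $(f\mid -)$ over this composition to obtain $C(fab)=C(fam)\cdot C(nb)$ where $n=(fa)^{*}\wedge b^{+}$. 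The assumption $f\leq(ab)^{+}$ yields $(fab)^{+}=f$, and tracing the domain through the decomposition forces $(fan)^{+}=f=(fa)^{+}$; that is, the co-restriction of $C(fa)$ to $n$ preserves the domain.

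The key technical step is then a rigidity lemma for Ehresmann semigroups: if $x\leq_{l}y$ in $S$ and $x^{+}=y^{+}$, then $x=y$. Equivalently, any co-restriction that does not shrink the domain must be the identity co-restriction. Applied to the present situation with $y=C(fa)$ and $x=(C(fa)\mid n)$, this forces $n=(fa)^{*}$, and hence $(fa)^{*}=n\leq b^{+}$, as required. I expect this rigidity to be the hardest part; it must be extracted from the $E$-Ehresmann identities together with axioms \ref{enu:EhresmannCatrestriction}--\ref{enu:EhresmannCatWedgeCorestriction}, leveraging uniqueness of (co-)restrictions and the congruence properties of $\Rt$ and $\Lt$. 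Once it is in hand, the bijection $f\leftrightarrow(c,d)\leftrightarrow e=fab$ is complete, and comparing sums term by term yields $\varphi(a)\varphi(b)=\varphi(ab)$.
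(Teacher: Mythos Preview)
Your argument has a genuine gap: the ``rigidity lemma'' you propose --- that $x\leq_{l} y$ and $x^{+}=y^{+}$ force $x=y$ --- is false for general $E$-Ehresmann semigroups. A counterexample lives in the monoid $B_{2}$ of binary relations on $\{1,2\}$ with $E$ the partial identities (this is $E$-Ehresmann, see \exaref{BinaryRelationsExamples}): take $y=\{(1,1),(1,2)\}$ and $x=y\cdot 1_{\{1\}}=\{(1,1)\}$; then $x\leq_{l} y$ and $x^{+}=1_{\{1\}}=y^{+}$, yet $x\neq y$. Consequently your claimed set equality fails too. With $a=\{(1,1),(1,2),(2,1)\}$ and $b=1_{\{1\}}$ in $B_{2}$ one has $ab=\{(1,1),(2,1)\}$ and $(ab)^{+}=1_{\{1,2\}}$; the idempotent $f=1_{\{1\}}$ satisfies $f\leq(ab)^{+}$, but $fa=\{(1,1),(1,2)\}$ gives $(fa)^{\ast}=1_{\{1,2\}}\not\leq b^{+}=1_{\{1\}}$. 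So the map $(c,d)\mapsto cd$ is not a bijection from your admissible pairs onto $\{e:e\leq ab\}$, and the reduction to that set equality cannot succeed. The intuition behind your lemma is sound in left restriction semigroups such as $\PT_{n}$, where co-restricting by a strictly smaller idempotent always shrinks the domain, but the $E$-Ehresmann axioms are strictly weaker than that.

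The paper sidesteps this obstruction by a two-case split rather than a single global bijection. When $a^{\ast}=b^{+}$ (Case~1) a direct bijection \emph{does} hold, and the paper proves it using only uniqueness of restriction \ref{enu:EhresmannCatrestriction}: for each $z\leq C(a)\cdot C(b)$ one sets $x^{\prime}=(\db(z)\mid C(a))$ and $y^{\prime}=(\rb(x^{\prime})\mid C(b))$, and uniqueness forces any admissible decomposition of $z$ to coincide with this one. When $a^{\ast}\neq b^{+}$ (Case~2) the paper does not attempt a bijection at the level of $a,b$; instead it replaces $a,b$ by $\tilde a=ab^{+}$ and $\tilde b=a^{\ast}b$ (which satisfy $\tilde a^{\ast}=\tilde b^{+}$ and $\tilde a\tilde b=ab$), applies Case~1 to $\tilde a,\tilde b$, and then uses axiom \ref{enu:EhresmannCatWedgeCorestriction} together with uniqueness of restriction --- not any rigidity statement --- to argue that the nonvanishing products $C(a^{\prime})\cdot C(b^{\prime})$ with $a^{\prime}\leq a$, $b^{\prime}\leq b$ are exactly those with $a^{\prime}\leq\tilde a$ and $b^{\prime}\leq\tilde b$.
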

Note that by our assumption the number of $b\in S$ such that $b\leq a$
is finite so the summations in \thmref{MainThm} are also finite.
Hence, $\varphi$ and $\psi$ are well defined. 
\begin{proof}
The proof that $\varphi$ and $\psi$ are bijectives is identical
to what is done in \cite{Steinberg2006}. 
\begin{align*}
\psi(\varphi(a)) & =\psi(\sum_{b\leq a}C(b))=\sum_{b\leq a}\psi(C(b))\\
 & =\sum_{b\leq a}\sum_{c\leq b}\mu(c,b)S(C(c))=\sum_{c\leq a}c\sum_{c\leq b\leq a}\mu(c,b)\\
 & =\sum_{c\leq a}c\delta(c,a)=a
\end{align*}
and 
\[
\varphi\psi(x)=\varphi(\sum_{y\leq x}\mu(y,x)S(y))=\sum_{y\leq x}\mu(y,x)\varphi(S(y))=C(S(x))=x
\]
where the third equality follows from the M\"{o}bius inversion theorem
and the definition of $\varphi$. Hence, $\varphi$ and $\psi$ are
bijectives. We now prove that $\varphi$ is a homomorphism. Let $a,b\in S$,
we have to prove that 
\begin{equation}
\sum_{c\leq ab}C(c)=(\sum_{a^{\prime}\leq a}C(a^{\prime}))(\sum_{b^{\prime}\leq b}C(b^{\prime})).\label{eq:HomomorphismEqualityOfVarphi}
\end{equation}

\begin{casenv}
\item \label{case:MainIsomorphismProofCase1}First assume that $\exists C(a)\cdot C(b)$,
that is, $\rb(C(a))=\db(C(b))$ (or equivalently, $a^{\ast}=b^{+}$).
In this case we can set $x=C(a)$ and $y=C(b)$ and then $C(ab)=C(a)C(b)=xy$.
So we can write \Eqref{HomomorphismEqualityOfVarphi} as 
\begin{equation}
\sum_{z\leq xy}z=(\sum_{x^{\prime}\leq x}x^{\prime})(\sum_{y^{\prime}\leq y}y^{\prime}).\label{eq:HomomorphismEqualityInMorphisms}
\end{equation}
According to \ref{enu:CatWithOrderCompatability} if $\exists x^{\prime}\cdot y^{\prime}$
then $x^{\prime}y^{\prime}\leq xy$. Hence, any element on the right
hand side of \Eqref{HomomorphismEqualityInMorphisms} is less than
or equal to $xy$. So we have only to show that any $z$ such that
$z\leq xy$ appears on the right hand side once. First, note that
$z=(\db(z)\mid xy)$ according to the uniqueness of restriction (part
of \ref{enu:EhresmannCatrestriction}). We can choose $x^{\prime}=(\db(z)\mid x$)
and $y^{\prime}=(\rb(x^{\prime})\mid y)$. Clearly, since $\db(y^{\prime})=\rb(x^{\prime})$
we have that $\exists x^{\prime}\cdot y^{\prime}$. Moreover by \ref{enu:CatWithOrderCompatability}
$x^{\prime}\cdot y^{\prime}\leq xy$ and $\db(x^{\prime}y^{\prime})=\db(x^{\prime})=\db(z)$
hence by uniqueness of restriction we have that $x^{\prime}\cdot y^{\prime}=(\db(z)\mid xy)=z$.
This proves that $z$ appears in the right hand side of \Eqref{HomomorphismEqualityInMorphisms}.
Now assume that $x^{\prime}\cdot y^{\prime}=z$ for some $x^{\prime}\leq x$
and $y^{\prime}\leq y$. Then we must have $\db(x^{\prime})=\db(z$)
so by uniqueness of restriction $x^{\prime}=(\db(z)\mid x$). Now,
since $\exists x^{\prime}\cdot y^{\prime}$ we must have that $\db(y^{\prime})=\rb(x^{\prime})$
so again by uniqueness of restriction $y^{\prime}=($$\rb(x^{\prime})\mid y)$.
So $z$ appears only once on the right hand side of \Eqref{HomomorphismEqualityInMorphisms}
and this finishes this case. 
\item Assume $\rb(C(a))\neq\db(C(b))$ (or equivalently, $a^{\ast}\neq b^{+})$.
Define $\tilde{a}=ab^{+}$ and $\tilde{b}=a^{\ast}b$. Note that 
\[
\tilde{a}\tilde{b}=ab^{+}a^{\ast}b=aa^{\ast}b^{+}b=ab
\]
so we have 
\[
\sum_{c\leq ab}C(c)=\sum_{c\leq\tilde{a}\tilde{b}}C(c).
\]
By \lemref{RestrictionCoRestrictionElements} 
\[
C(\tilde{a})=(C(a)\mid C(a^{\ast}b^{+}))=(C(a)\mid\rb(C(a))\wedge\db(C(b)))
\]
and 
\[
C(\tilde{b})=(C(a^{\ast}b^{+})\mid C(b))=(\rb(C(a))\wedge\db(C(b))\mid C(b))
\]
so clearly $\exists C(\tilde{a})\cdot C(\tilde{b})$. \caseref{MainIsomorphismProofCase1}
implies that 
\[
\sum_{c\leq\tilde{a}\tilde{b}}C(c)=(\sum_{a^{\prime}\leq\tilde{a}}C(a^{\prime}))(\sum_{b^{\prime}\leq\tilde{b}}C(b^{\prime})).
\]
Now, all that is left to show is that 
\begin{equation}
(\sum_{a^{\prime}\leq\tilde{a}}C(a^{\prime}))(\sum_{b^{\prime}\leq\tilde{b}}C(b^{\prime}))=(\sum_{a^{\prime}\leq a}C(a^{\prime}))(\sum_{b^{\prime}\leq b}C(b^{\prime})).\label{eq:MultiplicationOfLinearCombinationWithTilde}
\end{equation}
We can set again $x=C(a)$, $\tilde{x}=C(\tilde{a})$, $y=C(b)$ and
$\tilde{y}=C(\tilde{b})$ so \Eqref{MultiplicationOfLinearCombinationWithTilde}
can be written as 
\begin{equation}
(\sum_{x^{\prime}\leq\tilde{x}}x^{\prime})(\sum_{y^{\prime}\leq\tilde{y}}y^{\prime})=(\sum_{x^{\prime}\leq x}x^{\prime})(\sum_{y^{\prime}\leq y}y^{\prime}).\label{eq:MultiplicationWithTildesInMorphisms}
\end{equation}
We will show that a multiplication $x^{\prime}\cdot y^{\prime}$ on
the right hand side of \Eqref{MultiplicationWithTildesInMorphisms}
equals $0$ unless $x^{\prime}\leq\tilde{x}$ and $y^{\prime}\leq\tilde{y}$.
Take $x^{\prime}\leq x$ such that $x^{\prime}\nleq\tilde{x}$ and
assume that there is a $y^{\prime}\leq y$ such that $\exists x^{\prime}\cdot y^{\prime}$,
that is, $\rb(x^{\prime})=\db(y^{\prime})$. Since $y^{\prime}\leq y$
we have $\rb(x^{\prime})=\db(y^{\prime})\leq\db(y)$ by \enuref{CatWithOrderLeqOfDomRan}.
Now, by \ref{enu:EhresmannCatWedgeCorestriction} (choosing $f=\db(y)$)
we have that 
\[
(x^{\prime}\mid\rb(x^{\prime})\wedge\db(y))\leq(x\mid\rb(x)\wedge\db(y))
\]
but note that $(x\mid\rb(x)\wedge\db(y))=\tilde{x}$ and $\rb(x^{\prime})\wedge\db(y)=\rb(x^{\prime})$
so we get 
\[
x^{\prime}=(x^{\prime}\mid\rb(x^{\prime}))\leq\tilde{x}
\]
a contradiction. Similarly, take $y^{\prime}\leq y$ such that $y^{\prime}\nleq\tilde{y}$
and assume that there is an $x^{\prime}\leq x$ such that $\exists x^{\prime}\cdot y^{\prime}$,
that is, $\rb(x^{\prime})=\db(y^{\prime})$. Again, since $\rb(x^{\prime})\leq\rb(x)$
we have that $\db(y^{\prime})\leq\rb(x)$ and clearly $\db(y^{\prime})\leq\db(y)$
hence $\db(y^{\prime})\leq\rb(x)\wedge\db(y)=\db(\tilde{y})$. By
\ref{enu:EhresmannCatrestriction} there exists a restriction $(\db(y^{\prime})\mid\tilde{y})$.
But $(\db(y^{\prime})\mid\tilde{y})\leq\tilde{y}\leq y$ so by the
uniqueness of restriction $(\db(y^{\prime})\mid\tilde{y})=y^{\prime}$, hence $y^{\prime}\leq\tilde{y}$, a contradiction. This finishes the
proof. 
\end{casenv}
\end{proof}
\begin{rem}
Note that \thmref{MainThm} can be proved, \emph{mutatis mutandis}, using $\leq_l$ instead of $\leq_r$.
\end{rem}
\begin{cor}
Let $S$ be an $E$-Ehresmann semigroup such that $E$ is finite,
then $\mathbb{K}S$ is a unital algebra.\end{cor}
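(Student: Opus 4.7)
The plan is to reduce immediately to the Ehresmann category side via \thmref{MainThm} and exhibit a unit on that side, since category algebras of categories with finitely many objects have an obvious identity. Concretely, I would observe that finiteness of $E$ is equivalent to finiteness of $C^{0}$, because the objects of $C = \mathbf{C}(S)$ are in bijection with $E$ by construction of $\mathbf{C}$.

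Next I would write down the candidate identity in $\mathbb{K}C$, namely
\[
1_{C} = \sum_{e \in E} C(e),
\]
which is a finite sum by the assumption on $E$ (here I am identifying $e \in E$ with the identity morphism $1_{e} \in C^{1}$ via the construction of $\mathbf{C}(S)$). For any basis element $x \in C^{1}$, the product $C(e)\cdot x$ in $\mathbb{K}C$ is nonzero precisely when $e = \db(x)$, in which case it equals $x$; so $1_{C} \cdot x = x$, and the right-unit property follows symmetrically from $\rb(x) \in E$. By bilinearity, $1_{C}$ is a two-sided identity for $\mathbb{K}C$.

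Finally, since $\varphi : \mathbb{K}S \to \mathbb{K}C$ is an algebra isomorphism by \thmref{MainThm}, the element $\psi(1_{C}) \in \mathbb{K}S$ is a two-sided identity, so $\mathbb{K}S$ is unital. If one wants an explicit formula, applying $\psi$ termwise gives
\[
\psi(1_{C}) = \sum_{e \in E}\;\sum_{a \leq e} \mu(a, e)\, a,
\]
a finite $\mathbb{K}$-linear combination of elements of $S$.

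There is essentially no obstacle here beyond citing \thmref{MainThm}: once the isomorphism is in place, the existence of a unit is a purely formal consequence of $C^{0}$ being finite, and the verification that $\sum_{e \in E} C(e)$ acts as the identity on basis morphisms uses only the definition of composition in $\mathbb{K}C$.
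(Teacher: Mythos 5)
Your proof is correct and is essentially the same as the paper's: both pass through the isomorphism $\mathbb{K}S\cong\mathbb{K}C$ of \thmref{MainThm} and observe that $\sum_{e\in E}C(e)$ is an identity for $\mathbb{K}C$ since $C$ has finitely many objects. You merely spell out the verification and the pullback $\psi(1_C)$ explicitly, which the paper leaves implicit.
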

\begin{proof}
The isomorphic category algebra $\mathbb{K}C$ has the identity element
${\displaystyle \sum_{e\in E}C(e)}$. 
\end{proof}

\section{\label{sec:Examples}Examples}

In the following examples $C$ will always be the Ehresmann category
associated to the $E$-Ehresmann semigroup being discussed. 
\begin{example}
Let $M$ be a monoid and take $E=\{1\}$. It is easy to check that
$M$ is an $E$-Ehresmann semigroup. It is easy to see that if we
think of $M$ as a category with one object in the usual way we get
precisely $C$. The fact that $\mathbb{K}M$ is isomorphic to $\mathbb{K}C$
is trivial but true. 
\end{example}

\begin{example}
Let $S$ be an inverse semigroup such that $E(S)$ is finite. If we
take $E=E(S)$ our isomorphism is precisely \cite[Theorem 4.2]{Steinberg2006}.
If $S$ is a finite ample semigroup our isomorphism is precisely \cite[Theorem 4.2]{Guo2012}. 
\end{example}

\begin{example}
\label{exa:PartialFunctionsExample}Let $S=\PT_{n}$ be the monoid
of all partial functions on an $n$-element set and take $E=\{1_{A}\mid A\subseteq\{1\ldots n\}\}$
to be the semilattice of all the partial identities. It can be checked
that $\PT_{n}$ is an $E$-Ehresmann semigroup where for every $t\in\PT_{n}$
we have $t^{+}=1_{\dom(t)}$ and $t^{\ast}=1_{\im(t)}$. The corresponding
Ehresmann category is the category of all onto (total) functions between
subsets of an $n$-element set. Our isomorphism is then precisely
\cite[Proposition 3.2]{Stein2016}.
\end{example}

\begin{example}
\label{exa:BinaryRelationsExamples}Let $S=B_{n}$ be the monoid of
all relations on an $n$-element set and take again $E$ to be the
semilattice of all the partial identities. Again, $B_{n}$ is an $E$-Ehresmann
semigroup where for every $t\in B_{n}$ we have $t^{+}=1_{\dom(t)}$
and $t^{\ast}=1_{\im(t)}$. The associated category $C$ has the subsets
of $\{1,\ldots,n\}$ as objects and for every $a\in B_{n}$ there
is a corresponding morphism $C(a)$ from $\dom(a)$ to $\im(a)$.
This is the category of bi-surjective relations on the subsets of
an $n$-element set $X$. That is, the objects are all subsets of
$X$ and a morphism from $Y$ to $Z$ are all subsets $R$ of $Y\times Z$
such that both projections of $R$ to $Y$ and $Z$ respectively are
onto functions. This is a subcategory of the category applied in \cite{bouc2015}
to find the dimensions of the simple modules of $B_{n}$. 
\end{example}

\begin{example}
Let $S=[Y,M_{\alpha},\varphi_{\alpha,\beta}]$ be a strong semilattice
of monoids (where $Y$ is a finite semilattice). If we take $E=\{1_{\alpha}\in M_{\alpha}\mid\alpha\in Y\}\cong Y$
then it is proved in \cite[Examples 2.5.11-12]{Cornock2011} that
$S$ is an $E$-Ehresmann semigroup. In this case, the objects of
$C$ are in one-to-one correspondence with elements of $Y$. Every
$a\in M_{\alpha}$ corresponds to an endomorphism $C(a)$ of $\alpha$.
Note that all the morphisms in $C$ are endomorphisms.\end{example}
\begin{cor}
If $S=[Y,M_{\alpha},\varphi_{\alpha,\beta}]$ is a strong
semilattice of monoids with a finite $Y$ then $\mathbb{K}S$ is isomorphic
to ${\displaystyle \prod_{\alpha\in Y}}\mathbb{K}M_{\alpha}$. 
\end{cor}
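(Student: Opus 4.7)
The plan is to deduce this from Theorem 3.1 together with the structural description of $C$ given in the preceding example. Since $Y$ is finite, the semilattice $E = \{1_\alpha \mid \alpha \in Y\}$ is finite, so in particular every principal down ideal of $E$ is finite and Theorem 3.1 applies, yielding an isomorphism $\mathbb{K}S \cong \mathbb{K}C$. It therefore suffices to exhibit an isomorphism $\mathbb{K}C \cong \prod_{\alpha \in Y} \mathbb{K}M_\alpha$.

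I would then exploit the key fact noted in the preceding example: every morphism of $C$ is an endomorphism, and the endomorphism monoid at the object $\alpha$ is naturally isomorphic to $M_\alpha$ via $a \mapsto C(a)$ for $a \in M_\alpha$. Consequently, the set $C^1$ partitions as the disjoint union $\bigsqcup_{\alpha \in Y} \End_C(\alpha)$, and two morphisms lying in distinct components are never composable. This gives the $\mathbb{K}$-module decomposition
$$\mathbb{K}C \;=\; \bigoplus_{\alpha \in Y} \mathbb{K}\End_C(\alpha) \;\cong\; \bigoplus_{\alpha \in Y} \mathbb{K}M_\alpha,$$
and because products across distinct summands vanish in $\mathbb{K}C$, this is simultaneously a decomposition of $\mathbb{K}$-algebras. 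Since $Y$ is finite, the direct sum coincides with the direct product $\prod_{\alpha \in Y} \mathbb{K}M_\alpha$. Composing with the isomorphism supplied by Theorem 3.1 finishes the argument.

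There is essentially no obstacle here; the only point to verify carefully is the (easy) observation that whenever every morphism of a category is an endomorphism, the category algebra splits as a product of the endomorphism monoid algebras. All the substantive work has already been done in the proof of Theorem 3.1 and in the verification (in the preceding example, via \cite[Examples 2.5.11--12]{Cornock2011}) that a strong semilattice of monoids is indeed an $E$-Ehresmann semigroup with $E \cong Y$.
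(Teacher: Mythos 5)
Your argument is correct and is exactly the (implicit) proof intended by the paper: the corollary is stated immediately after the example precisely because Theorem \ref{thm:MainThm} gives $\mathbb{K}S\cong\mathbb{K}C$ and the observation that all morphisms of $C$ are endomorphisms forces $\mathbb{K}C$ to split as the finite product $\prod_{\alpha\in Y}\mathbb{K}M_{\alpha}$. Nothing further is needed.
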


\section{Ehresmann EI-categories}

\subsection{\label{sec:Ehresmann-EI-categories}Characterization and examples}

Recall that if $C$ is a category an \emph{endomorphism} is a morphism of $C$ from some object $e\in C^0$ to $e$.
A category is called an \emph{EI-category} if every endomorphism is
an isomorphism, or in other words, if every endomorphism monoid is
a group. Algebras of EI-categories are better understood than general
category algebras. Assume $\mathbb{K}$ is an algebraically closed field and $C$ is a finite
EI-category where the orders of the endomorphism groups of its objects are invertible
in $\mathbb{K}$. There is a way to describe the Jacobson radical
of $\mathbb{K}C$ (\cite[Proposition 4.6]{Li2011}) and its ordinary quiver
(\cite[Theorem 4.7]{Li2011} or \cite[Theorem 6.13]{Margolis2012}).
By \thmref{MainThm} we then know how to compute the Jacobson radical
and ordinary quiver of an $E$-Ehresmann semigroup if the corresponding
Ehresmann category is an EI-category. In this section we will characterize
such semigroups. We start with understanding elements corresponding
to left, right and two-sided invertible morphisms. Recall that if
$a\in S$ then $\db(C(a))=C(a^{+})$ and $\rb(C(a))=C(a^{\ast})$. 
\begin{lem}
\label{lem:CharacterizeRightInvertibility}$C(a)$ is right invertible
if and only if $a\Rc a^{+}$ and $a$ has an inverse $b$ such that
$b\Lc a^{+}$ and $a^{\ast}=b^{+}$. A dual statement holds for left
invertibility.\end{lem}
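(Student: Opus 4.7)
The plan is to translate the category-theoretic condition ``$C(a)$ is right invertible'' into a single semigroup-theoretic identity and then recover the four listed conditions from it. Any candidate right inverse in $C$ must arise as $C(b)$ for a unique $b\in S$; composability $\exists C(a)\cdot C(b)$ forces $b^{+}=a^{*}$, matching the range of $1_{a^{+}}$ forces $b^{*}=a^{+}$, and the equality $C(a)\cdot C(b)=C(ab)=C(a^{+})$ forces $ab=a^{+}$. So I will show that right-invertibility of $C(a)$ is equivalent to the existence of $b\in S$ with $ab=a^{+}$, $b^{+}=a^{*}$ and $b^{*}=a^{+}$, and then derive each of the four listed conditions from this reformulation.

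For the forward direction I would assume $C(a)$ is right invertible, so such a $b$ exists. The identity $ab=a^{+}$ immediately gives $aba=a^{+}a=a$ and, since $b=bb^{*}=ba^{+}$, also $bab=ba^{+}=b$, so $b$ is a semigroup inverse of $a$. The same identity places $a^{+}\in aS$, hence $a\Rc a^{+}$, and places $a^{+}\in Sb$ while $b=ba^{+}\in Sa^{+}$ forces $b\Lc a^{+}$. The assumption $b^{+}=a^{*}$ is the final listed condition.

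For the converse I would assume $b$ is a semigroup inverse of $a$ with $a\Rc a^{+}$, $b\Lc a^{+}$ and $a^{*}=b^{+}$. Since $\Lc\subseteq\Lt$, we have $b\Lt a^{+}$, and because each $\Lt$-class contains a unique idempotent of $E$ this forces $b^{*}=a^{+}$. Hence the product $C(a)\cdot C(b)$ is defined with both domain and range equal to $a^{+}$, and it only remains to establish $ab=a^{+}$. Writing $b=ba^{+}$ (from $b\Lc a^{+}$), writing $a^{+}=aw$ for some $w\in S$ (from $a\Rc a^{+}$), and invoking $aba=a$, the key short manipulation will be
\[
ab \;=\; ab\cdot a^{+} \;=\; ab\cdot aw \;=\; aba\cdot w \;=\; aw \;=\; a^{+},
\]
after which $C(a)\cdot C(b)=C(ab)=C(a^{+})=1_{a^{+}}$ as required. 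The main obstacle I anticipate is precisely spotting this three-step rearrangement; once one sees how to combine $ba^{+}=b$, $a^{+}=aw$ and $aba=a$, there is nothing further to do. The dual statement for left-invertibility will then follow by a symmetric argument.
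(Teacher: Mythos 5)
Your proof is correct and takes essentially the same route as the paper: both directions reduce right-invertibility of $C(a)$ to the existence of $b$ with $ab=a^{+}$, $b^{+}=a^{*}$ and $b^{*}=a^{+}$, and then translate back and forth to the Green's-relations conditions. The only (harmless) difference is in the converse, where the paper asserts $ab=a^{+}$ by appealing to $ab$ being an idempotent with $ab\,\Rc\,a^{+}$ and $ab\,\Lc\,a^{+}$, while you verify it by the explicit computation $ab=ab\,a^{+}=ab\,aw=aw=a^{+}$; both arguments are valid.
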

\begin{proof}
Assume that $C(b)$ is a right inverse for $C(a)$, then clearly $\db(C(a))=\rb(C(b))$
and $\rb(C(a))=\db(C(b))$ so $a^{+}=b^{\ast}$ and $a^{\ast}=b^{+}$.
Since $C(ab)=C(a)C(b)=\db(C(a))=C(a^{+})$ we know that 
\[
ab=a^{+}
\]
and clearly 
\[
a^{+}a=a,\quad ba^{+}=bb^{\ast}=b.
\]
Hence 
\[
a\Rc a^{+},\quad b\Lc a^{+}
\]
so $b$ is an inverse of $a$ as required.

In the other direction, we have that $\exists C(a)\cdot C(b)$ since $a^{\ast}=b^{+}$. Now $C(a)\cdot C(b)=C(ab)=C(a^{+})$
since $b$ is an inverse of $a$ with $a\Rc a^{+}\Lc b$. This finishes
the proof since the dual case is similar.
\end{proof}

\begin{rem}
The requirement $a\Rc a^{+}$($a\Lc a^{\ast}$) is not enough for
right (respectively, left) invertibility. For instance take $S=\PT_{2}$
and $E=\{1_{A}\mid A\subseteq\{1,2\}\}$ as above. Choose $a$ to
be the (total) constant transformation with image $\{1\}$.

\[
a=\left(\begin{array}{cc}
1 & 2\\
1 & 1
\end{array}\right).
\]

It is easy to check that $C(a)$ is not left invertible in $C$ but
it is $\Lc$-equivalent to

\[
a^{\ast}=\left(\begin{array}{cc}
1 & 2\\
1 & \emptyset
\end{array}\right).
\]

\end{rem}
However, we have the following two sided version. 
\begin{lem}
\label{lem:InvertibleCriterion}$C(a)$ is invertible in $C$ if and
only if $a\Rc a^{+}$ and $a\Lc a^{\ast}$.\end{lem}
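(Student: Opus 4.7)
The plan is to prove the two implications separately, invoking \lemref{CharacterizeRightInvertibility} for necessity and a Green's-relations / Miller--Clifford style construction for sufficiency.

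\emph{Necessity.} If $C(a)$ is invertible in $C$ then it is in particular both left and right invertible, so \lemref{CharacterizeRightInvertibility} together with its left-right dual immediately yield $a\Rc a^{+}$ and $a\Lc a^{\ast}$.

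\emph{Sufficiency.} This is the substantive direction. Given $a\Rc a^{+}$ and $a\Lc a^{\ast}$, choose $s,r\in S^{1}$ with $as=a^{+}$ and $ra=a^{\ast}$; multiplying $as=a^{+}$ on the left by $r$ and using $ra=a^{\ast}$ shows that the two elements $a^{\ast}s$ and $ra^{+}$ coincide, so one may set $b:=a^{\ast}s=ra^{+}$. Direct computation using $aa^{\ast}=a$ and $a^{+}a=a$ gives $ab=a^{+}$, $ba=a^{\ast}$, $aba=a$, and $bab=b$, so $b$ is a semigroup inverse of $a$; the same computations show $bS^{1}=a^{\ast}S^{1}$ and $S^{1}b=S^{1}a^{+}$, i.e.\ $b\Rc a^{\ast}$ and $b\Lc a^{+}$. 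The key step linking $S$ to the category is then to identify $b^{+}$ and $b^{\ast}$ correctly: since $\Rc\subseteq\Rt$ we have $b\Rt a^{\ast}$, so \condref{Ehresmann1} forces the unique idempotent of $E$ in this $\Rt$-class to be both $b^{+}$ and $a^{\ast}$, whence $b^{+}=a^{\ast}$; dually $b^{\ast}=a^{+}$. Consequently $\db(C(b))=a^{\ast}=\rb(C(a))$ and $\rb(C(b))=a^{+}=\db(C(a))$, so the products $C(a)\cdot C(b)=C(ab)=C(a^{+})$ and $C(b)\cdot C(a)=C(ba)=C(a^{\ast})$ are exactly the identity morphisms at $\db(C(a))$ and $\rb(C(a))$, exhibiting $C(b)$ as a two-sided inverse of $C(a)$ in $C$.

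The main obstacle is not producing a semigroup inverse $b$ of $a$ --- that is the routine Miller--Clifford argument --- but rather verifying that this inverse has the correct values of $^{+}$ and $^{\ast}$ \emph{in the Ehresmann sense}, so that $C(b)$ actually lives at the objects $a^{\ast}$ and $a^{+}$ required for it to be an inverse morphism in $C$. This is precisely the point at which the inclusion $\Rc\subseteq\Rt$ together with \condref{Ehresmann1} must be invoked.
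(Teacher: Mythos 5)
Your proof is correct and follows essentially the same route as the paper: necessity via \lemref{CharacterizeRightInvertibility} and its dual, and sufficiency by producing the inverse $b$ of $a$ lying in $\Rc_{a^{\ast}}\cap\Lc_{a^{+}}$ (the paper simply asserts its existence where you carry out the Miller--Clifford construction) and checking it yields a two-sided inverse morphism. The step you flag as the key point --- identifying $b^{+}=a^{\ast}$ and $b^{\ast}=a^{+}$ via $\Rc\subseteq\Rt$ and \condref{Ehresmann1} --- is exactly what the paper leaves implicit in its appeal to \lemref{CharacterizeRightInvertibility}, so your write-up is a correct, more detailed version of the same argument.
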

\begin{proof}
If $C(a)$ is left and right invertible then \lemref{CharacterizeRightInvertibility}
implies that $a\Rc a^{+}$ and $a\Lc a^{\ast}$. In the other direction
take $b$ to be the inverse of $a$ such that $b\Rc a^{\ast}$and $b\Lc a^{+}$
and again the result follows from \lemref{CharacterizeRightInvertibility}.\end{proof}
\begin{rem} \label{rem:OnSetReg}
The set of all elements $a\in S$ such that $a\Rc a^{+}$ and $a\Lc a^{\ast}$
appears in \cite[Section 3]{Lawson1990} in a more general context
and it is denoted by $\Reg_{E}(S)$. We will see later (\lemref{RegESubsemigroup}) that this is an inverse subsemigroup of $S$ \end{rem}
\begin{cor}
$C(e),C(f)\in C^{0}$ are isomorphic objects if and only if $e\Dc f$.\end{cor}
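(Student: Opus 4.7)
The plan is to interpret the statement through \lemref{InvertibleCriterion} and the inclusion $\Rc\subseteq\Rt$, $\Lc\subseteq\Lt$ noted in the preliminaries, so that an isomorphism between $C(e)$ and $C(f)$ corresponds precisely to an element $a\in S$ that sits in the same $\Rc$-class as $e$ and the same $\Lc$-class as $f$.

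For the forward direction, I would assume $C(e)\cong C(f)$ as objects of $C$, so there exists an invertible morphism of the form $C(a)$ with $\db(C(a))=C(e)$ and $\rb(C(a))=C(f)$. Since $\db(C(a))=C(a^{+})$ and $\rb(C(a))=C(a^{\ast})$, this forces $a^{+}=e$ and $a^{\ast}=f$. \lemref{InvertibleCriterion} then gives $a\,\Rc\,a^{+}=e$ and $a\,\Lc\,a^{\ast}=f$, hence $e\,\Dc\,f$.

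For the converse, assume $e\,\Dc\,f$ and pick $a\in S$ with $e\,\Rc\,a\,\Lc\,f$. The main point — and the only subtle step — is to show that $a^{+}=e$ and $a^{\ast}=f$. Since $\Rc\subseteq\Rt$, we have $e\,\Rt\,a$; as $e\in E$ and every $\Rt$-class contains a unique idempotent of $E$, and $a^{+}$ is precisely that idempotent for the class of $a$, we conclude $a^{+}=e$. A dual argument using $\Lc\subseteq\Lt$ gives $a^{\ast}=f$. Now $a\,\Rc\,a^{+}$ and $a\,\Lc\,a^{\ast}$, so by \lemref{InvertibleCriterion} the morphism $C(a)$ is invertible, and since $\db(C(a))=C(e)$ and $\rb(C(a))=C(f)$, it witnesses $C(e)\cong C(f)$.

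No real obstacle arises: the argument reduces cleanly to applying \lemref{InvertibleCriterion} in both directions, with the only content being the observation that Green's $\Rc$- and $\Lc$-relations identify the canonical idempotents $a^{+}, a^{\ast}$ when such an $\Rc$- or $\Lc$-related idempotent already lies in $E$.
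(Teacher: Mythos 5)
Your proof is correct and follows essentially the same route as the paper's: both directions reduce to \lemref{InvertibleCriterion} applied to an element $a\in\Rc_{e}\cap\Lc_{f}$. You merely make explicit the step (left implicit in the paper) that $\Rc\subseteq\Rt$ together with the uniqueness of the idempotent of $E$ in each $\Rt$-class forces $a^{+}=e$ and $a^{\ast}=f$.
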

\begin{proof}
If $e\Dc f$ take $a\in\Rc_{e}\cap\Lc_{f}$ and $C(a)$ is an isomorphism
between $C(e)$ and $C(f)$. On the other hand, if $C(a)$ with $\db(C(a))=C(e)$
and $\rb(C(a))=C(f)$ is an isomorphism then $a\Rc e$ and $a\Lc f$
so $e\Dc f$. \end{proof}
\begin{cor}
$C$ is an $EI$-category if and only if $a^{+}=a^{\ast}$ implies
that $a$ is a group element. In other words, $C$ is an EI-category
if and only if the $\Ht$-class of $e$ equals its $\Hc$ class for
every $e\in E$.\end{cor}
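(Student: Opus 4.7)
The plan is to translate the EI-property of $C$ into a semigroup-theoretic condition on $S$ via the bijection $a\leftrightarrow C(a)$, using \lemref{InvertibleCriterion} as the essential input. The whole statement is really a direct bookkeeping exercise once one identifies which elements of $S$ correspond to endomorphisms and which correspond to isomorphisms.

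First I would note that endomorphisms of $C$ correspond precisely to elements $a\in S$ with $a^{+}=a^{\ast}$. Indeed, $\db(C(a))=C(a^{+})$ and $\rb(C(a))=C(a^{\ast})$, so $C(a)$ is an endomorphism at the object $C(e)$ if and only if $a^{+}=a^{\ast}=e$, which is exactly the condition that $a$ lies in the $\Ht$-class of $e$. This sets up a bijection between endomorphisms of $C$ at $C(e)$ and elements of the $\Ht$-class of $e$.

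Next I would apply \lemref{InvertibleCriterion}: $C(a)$ is invertible if and only if $a\Rc a^{+}$ and $a\Lc a^{\ast}$. Restricting to the case $a^{+}=a^{\ast}=e$, these two conditions together say $a\Rc e$ and $a\Lc e$, i.e., $a\Hc e$, which is exactly the statement that $a$ lies in the maximal subgroup at $e$, the usual meaning of ``$a$ is a group element''. Combining the two observations, $C$ is an EI-category if and only if every $a\in S$ with $a^{+}=a^{\ast}$ is a group element, which is the first form of the characterization.

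For the second (``in other words'') form: the $\Ht$-class of $e\in E$ is exactly $\{a\in S:a^{+}=a^{\ast}=e\}$, while the $\Hc$-class of $e$ is $\{a\in S:a\Hc e\}$; since $\Hc\subseteq\Ht$ the latter is always contained in the former, so equality of the two classes for every $e\in E$ is equivalent to the statement that $a^{+}=a^{\ast}$ forces $a\Hc a^{+}$, i.e., to the first form. There is no genuine obstacle: the result is a direct repackaging of \lemref{InvertibleCriterion}, and the only minor point to verify is this equivalence of the two formulations, which reduces to the inclusion $\Hc\subseteq\Ht$ recorded in the preliminaries.
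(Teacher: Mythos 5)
Your argument is correct and is exactly the paper's route: the paper's proof of this corollary is simply ``Clear from \lemref{InvertibleCriterion}'', and what you have written is the honest unpacking of that---endomorphisms at $C(e)$ are the elements with $a^{+}=a^{\ast}=e$ (the $\Ht$-class of $e$), and by \lemref{InvertibleCriterion} the invertible ones among them are those with $a\Hc e$ (the $\Hc$-class of $e$). The only point worth flagging is that you silently read ``group element'' as ``$a\Hc a^{+}$'' rather than ``$a$ lies in \emph{some} subgroup of $S$'' (whose identity could a priori be an idempotent outside $E$); this is the reading the corollary's second sentence pins down, so your proof matches the intended statement.
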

\begin{proof}
Clear from \lemref{InvertibleCriterion}.
\end{proof}
We also note the following necessary condition for $C$ to be an EI-category.
\begin{lem}
\label{lem:EIImpliesMaximalSemilattice} If $C$ is an EI-category
then $E$ is a maximal semilattice in $S$. \end{lem}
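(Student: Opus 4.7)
The plan is to prove the contrapositive: if $E$ is properly contained in some subsemilattice $E'$ of $S$, then $C$ fails to be an EI-category. I would fix any $f\in E'\setminus E$ and observe that $f$ is an idempotent of $S$ which commutes with every element of $E$, since $E\subseteq E'$ and the semilattice $E'$ is commutative. In particular, combined with the defining relations $f^{+}f=f$ and $ff^{\ast}=f$, one gets $ff^{+}=f^{+}f=f$ and $ff^{\ast}=f^{\ast}f=f$.

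The crux of the argument will be to show that $f^{+}=f^{\ast}$, so that $C(f)$ is an endomorphism at the object $C(f^{+})\in C^{0}$. I would apply the identity $(xy)^{\ast}=(x^{\ast}y)^{\ast}$ with $x=f$ and $y=f^{+}$: the left-hand side simplifies to $(ff^{+})^{\ast}=f^{\ast}$, while the right-hand side simplifies to $(f^{\ast}f^{+})^{\ast}=f^{\ast}f^{+}$, the latter equality holding because $f^{\ast}f^{+}\in E$. This yields $f^{\ast}=f^{\ast}f^{+}$, i.e., $f^{\ast}\leq f^{+}$ in the semilattice order on $E$. Dually, applying $(xy)^{+}=(xy^{+})^{+}$ with $x=f^{\ast}$ and $y=f$, the left-hand side becomes $(f^{\ast}f)^{+}=f^{+}$ and the right-hand side becomes $(f^{\ast}f^{+})^{+}=f^{\ast}f^{+}$, giving $f^{+}=f^{\ast}f^{+}$ and hence $f^{+}\leq f^{\ast}$. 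Combining the two inequalities produces $f^{+}=f^{\ast}$.

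To conclude, I would suppose toward a contradiction that $C(f)$ is an isomorphism in $C$. By \lemref{InvertibleCriterion} this forces $f\,\Rc\,f^{+}$ and $f\,\Lc\,f^{\ast}=f^{+}$, so $f\,\Hc\,f^{+}$. The $\Hc$-class $\Hc_{f^{+}}$ contains the idempotent $f^{+}$ and is therefore a group; a group possesses a unique idempotent, so the two idempotents $f$ and $f^{+}$ of this group must coincide, giving $f=f^{+}\in E$. This contradicts the choice $f\in E'\setminus E$, so $C(f)$ is an endomorphism that is not an isomorphism, and $C$ is not an EI-category.

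The only delicate step is the derivation $f^{+}=f^{\ast}$ from commutativity of $f$ with $E$ together with the defining $E$-Ehresmann identities; once this is established, the rest is a routine consequence of \lemref{InvertibleCriterion} and the standard fact that a group $\Hc$-class contains exactly one idempotent.
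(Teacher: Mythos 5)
Your proof is correct and follows essentially the same route as the paper's: exhibit an idempotent $f\notin E$ commuting with all of $E$, deduce $f^{+}=f^{\ast}$, and derive a contradiction from the fact that a group has a unique idempotent. You actually supply more detail than the paper, which simply asserts $f^{+}=f^{\ast}$ (it is immediate because commutativity with $E$ makes the left and right $E$-identities of $f$ coincide), and you phrase the final contradiction in $S$ via \lemref{InvertibleCriterion} and the group $\Hc$-class of $f^{+}$ rather than directly in the endomorphism group of $C$; both are fine.
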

\begin{proof}
Assume that there is some $f\in E(S)\backslash E$ which commutes
with every $e\in E$. It follows that $f^{+}=f^{\ast}$ so the morphism
$C(f)$ is an element in some endomorphism group of $C$. This is
a contradiction since $C(f)C(f)=C(f)$ and groups have no non-identity
idempotents.
\end{proof}

Note that the condition of \lemref{EIImpliesMaximalSemilattice} is
not sufficient. For instance, take the monoid of all binary relations
$B_{n}$ which is $E$-Ehresmann as mentioned above (\exaref{BinaryRelationsExamples}).
It is easy to check that the set of all partial identities is a maximal
semilattice but the corresponding category $C$ is not an EI-category.

It is also worth mentioning the groupoid case.
\begin{cor}
$C$ is a groupoid if and only if $S$ is an inverse semigroup and
$E=E(S)$.\end{cor}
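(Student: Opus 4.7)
The plan is to prove both implications via \lemref{InvertibleCriterion}.

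For the ``if'' direction, I will assume $S$ is inverse with $E = E(S)$. Starting from the identity $a a^{-1} a = a$, one gets $a \Rc aa^{-1}$ and $a \Lc a^{-1}a$, and since both $aa^{-1}$ and $a^{-1}a$ are idempotents of $E(S) = E$ lying in the $\Rt$ and $\Lt$ classes of $a$ respectively, the uniqueness clause in the definition of an $E$-Ehresmann semigroup forces $a^{+} = aa^{-1}$ and $a^{\ast} = a^{-1}a$. \lemref{InvertibleCriterion} then yields $C(a)$ invertible for every $a$, so $C$ is a groupoid.

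For the ``only if'' direction, I will assume $C$ is a groupoid. \lemref{InvertibleCriterion} gives $a \Rc a^{+}$ and $a \Lc a^{\ast}$ for every $a$, which already makes $S$ regular. The central step is to prove $E(S) \subseteq E$. To this end, I will fix $f \in E(S)$, let $e = f^{+} \wedge f^{\ast} = f^{+}f^{\ast} \in E$, and argue $f = e$. Since $f$ and $f^{+}$ are both idempotents in the same $\Rc$-class, a routine Green's calculation gives $ff^{+} = f^{+}$; dually $f^{\ast}f = f^{\ast}$. These identities force $fe = f f^{+} f^{\ast} = f^{+} f^{\ast} = e$ and symmetrically $ef = e$. \lemref{RestrictionCoRestrictionElements} then identifies
\[
(C(f) \mid e) = C(fe) = C(e), \qquad (e \mid C(f)) = C(ef) = C(e).
\]
Finally, the Ehresmann product formula \eqref{eq:EhresmannProduct} applied to $f \cdot f = f$, observing that $\rb(C(f)) \wedge \db(C(f)) = f^{\ast} \wedge f^{+} = e$, produces
\[
C(f) = (C(f) \mid e) \cdot (e \mid C(f)) = C(e) \cdot C(e) = C(e),
\]
since $C(e)$ is the identity morphism at $e$. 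Hence $f = e \in E$.

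Once $E = E(S)$ is established, $S$ is a regular semigroup with a commutative semilattice of idempotents, which forces $S$ to be inverse. The main obstacle is the collapse of both the co-restriction $(C(f) \mid e)$ and the restriction $(e \mid C(f))$ onto the identity $C(e)$, forcing an arbitrary idempotent $f \in E(S)$ to coincide with $e$ and hence to lie in $E$. Everything else---regularity via \lemref{InvertibleCriterion} and the standard passage from ``regular plus commuting idempotents'' to ``inverse''---is routine.
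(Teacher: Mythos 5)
Your proof is correct, but the ``only if'' direction takes a genuinely different route from the paper's. The paper first upgrades $\Rt=\Rc$ and $\Lt=\Lc$ (since $a\Rc a^{+}$ and $a^{+}$ is constant on $\Rt$-classes), deduces that every $\Rc$- and $\Lc$-class has a unique idempotent so that $S$ is inverse, and only then gets $E=E(S)$ from \lemref{EIImpliesMaximalSemilattice} (a groupoid is in particular an EI-category, so $E$ must be a maximal semilattice). You invert the order: you first prove $E(S)\subseteq E$ by pinning down an arbitrary idempotent $f$ as exactly $f^{+}\wedge f^{\ast}$, and then obtain inverseness from ``regular with commuting idempotents.'' Your key computation is sound: $f\Rc f^{+}$ and $f\Lc f^{\ast}$ (from \lemref{InvertibleCriterion}) give $ff^{+}=f^{+}$ and $f^{\ast}f=f^{\ast}$, hence $fe=ef=e$ for $e=f^{+}f^{\ast}$, and feeding $f\cdot f=f$ through \eqref{eq:EhresmannProduct} together with \lemref{RestrictionCoRestrictionElements} collapses both factors to $C(e)$, forcing $f=e$. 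This buys a more explicit statement (every idempotent equals $f^{+}\wedge f^{\ast}$) at the cost of redoing work the paper already has available: the same fact that idempotents of $\Reg_E(S)$ lie in $E$ is proved, by a purely semigroup-theoretic argument via the Clifford--Miller theorem, inside the proof of \lemref{RegESubsemigroup}, and could have been cited. Two cosmetic points: write $(C(e)\mid C(f))$ and $(C(f)\mid C(e))$ rather than mixing the semigroup element $e$ with category notation, and note that your appeal to \eqref{eq:EhresmannProduct} is legitimate because $S={\bf S}({\bf C}(S))$ by \thmref{EhresmannIsoTheorem}.
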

\begin{proof}
Assume that $C$ is a groupoid. Let $a,b\in S$ such that $a\Rt b$.
By \lemref{InvertibleCriterion} 
\[
a\Rc a^{+}=b^{+}\Rc b
\]
hence $\Rc=\Rt$ and this implies that any $\Rc$ class contains precisely
one idempotent. A similar observation is true for $\Lc$ classes.
Hence $S$ is inverse and $E(S)$ is a semilattice. By \lemref{EIImpliesMaximalSemilattice}
$E$ is a maximal semilattice so $E=E(S)$ as required. The other
direction is clear from \lemref{InvertibleCriterion}.
\end{proof}
\begin{rem} Note that in this case $C$ is the inductive groupoid corresponding to $S$.
\end{rem}
We now give some examples of $E$-Ehresmann semigroups whose corresponding
Ehresmann category is an EI-category.
\begin{example}
\label{exa:SubalgebrasOfPTn}Take $S=\PT_{n}$ which is $E$-Ehresmann
as mentioned in \exaref{PartialFunctionsExample}. The corresponding
Ehresmann category $E_{n}$ is the category of all onto functions
between subsets of an $n$-element set. $E_n$ is an EI-category since the endomorphism monoid of an object $A$ is the group $S_A$ of all permutations of elements of $A$. Every $(2,1,1)$-subalgebra
of $\PT_{n}$ is also an $E$-Ehresmann semigroup (since $E$-Ehresmann
semigroups form a variety). The corresponding Ehresmann category $C$
is a subcategory of $E_{n}$. So every endomorphism monoid of $C$
is a submonoid of some endomorphism group in $E_{n}$. Since every
submonoid of a finite group is a group, $C$ is also an EI-category.
Several well-known examples of such subsemigroups include: order-preserving
partial functions (or more generally, order-preserving partial functions
with respect to some partial order on $\{1,\ldots,n\}$), weakly-decreasing
partial functions, order-preserving and weakly-decreasing partial
functions (also known as the partial Catalan monoid), and orientation-preserving
partial functions. In \cite[Section 5]{Stein2016} the author has
used the corresponding EI-category to describe the ordinary quiver
of the algebras of some of these semigroups. Another remark is worth
mentioning. $\PT_{n}$ satisfies the identity $xy^{+}=(xy^{+})^{+}x$.
Left $E$-Ehresmann semigroups that satisfy this identity are called
\emph{left restriction}. It is well known \cite[Corollary 6.3]{Gould2010} that left restriction semigroups
are precisely the $(2,1)$-subalgebras of $\PT_{n}$ where the unary
operation is $^{+}$. Clearly, every
$(2,1,1)$-subalgebra of $\PT_{n}$ is also left restriction.
\end{example}

\begin{example}
\label{exa:E=00003DE(S)}Let $S$ be a finite $E(S)$-Ehresmann semigroup,
i.e., an $E$-Ehresmann semigroup with $E=E(S)$. Let $a\in S$ be
an element such that $a^{+}=a^{\ast}$ and $C(a)$ is an idempotent
of the endomorphism monoid of $C(a^{+})$. Since $C(a)=C(a)C(a)=C(aa)$
we know that $a\in E(S)=E$ so $a=a^{+}$. Hence any idempotent of
an endomorphism monoid is the identity morphism of the object. A
finite monoid with only one idempotent is a group hence the corresponding
category is an EI-category. In particular this class contains all
finite adequate semigroups and hence all finite ample semigroups which
are the semigroups considered in \cite{Guo2012}.
\end{example}

\begin{example}
Let $\T_n$ be the monoid of all (total) functions on an $n$-element set. Denote by $\id$ the identity function and by $\bf{k}$ the constant functions that sends every element to $k$. Define $S$ to be the subsemigroup of $\T_2 \times \T_2^{\op}$ containing the six elements 
\[
{(\bf{1},\bf{1}),  (\bf{2},\bf{1}), (\bf{1},\bf{2}), (\bf{2},\bf{2}), (\bf{1},\id), (\id,\bf{1})}.
\]


Note that 
\[
B=\{(\bf{1},\bf{1}),  (\bf{2},\bf{1}), (\bf{1},\bf{2}), (\bf{2},\bf{2})\}
\] forms a rectangular band and that $(\bf{1},\id)$ ($(\id,\bf{1})$)
is a left (respectively, right) identity for elements in $B$. Choose $E=\{(\bf{1},\bf{1}),(\bf{1},\id),(\id,\bf{1})\}$, which is clearly a subsemilattice.
It is easy to check that every $\Rt$ and $\Lt$ class contains one element of $E$ and the identities $(xy)^{+}=(xy^{+})^{+}$ and
$(xy)^{\ast}=(x^{\ast}y)^{\ast}$ hold so this is an $E$-Ehresmann
semigroup. The corresponding Ehresmann category (which is clearly
an EI-category) is given in the following drawing:

\begin{center}
\begin{tikzpicture}\path (0,2) node [shape=circle,draw] (e) {}  edge [loop above] node {$ (\bf{1},\id)$} (); \path (4,2) node [shape=circle,draw] (f) {} edge [loop above] node {$(\id,\bf{1})$} ();\path (2,0) node [shape=circle,draw] (a) {} edge [loop below] node {$(\bf{1},\bf{1})$} (); \draw[thick,->] (e)--(f) node [above,midway] {$ (\bf{2},\bf{2})$}; \draw[thick,->] (e)--(a) node [left,midway] {$ (\bf{1},\bf{2})$}; \draw[thick,->] (a)--(f) node [right,midway] {$ (\bf{2},\bf{1})$}; \end{tikzpicture}
\end{center}

Note that this example is not included in the previous ones. Definitely,
$E\neq S=E(S)$. Moreover, $S$ is not left or right restriction since
\[
(\bf{2},\bf{2})(\bf{1},\id)=(\bf{1},\bf{2})\neq (\bf{2},\bf{2})=(\bf{1},\id)(\bf{2},\bf{2})=((\bf{2},\bf{2})(\bf{1},\id))^{+}(\bf{2},\bf{2})
\]
and

\[
 (\id,\bf{1})(\bf{2},\bf{2})=(\bf{2},\bf{1})\neq (\bf{2},\bf{2})=(\bf{2},\bf{2}) (\id,\bf{1})=(\bf{2},\bf{2})((\id,\bf{1})(\bf{2},\bf{2}))^{\ast}.
\]
Hence, $S$ is not a $(2,1,1)$-subalgebra of $\PT_{n}$ (or $\PT_{n}^{\op}$).
The semigroup $S$ provides an example of an $E$-Ehresmann semigroup
whose corresponding Ehresmann category is an EI-category but there
is an idempotent $({\bf{2}},{\bf{2}})\in S$ which is not $\Rc$ equivalent to $(\bf{2},\bf{2})^{+}$
and not $\Lc$ equivalent to $(\bf{2},\bf{2})^{\ast}$.
\end{example}
\subsection{\label{sec:Max-Semisimple-Image}The maximal semisimple image}

Let $S$ be a finite semigroup whose idempotents $E(S)$ commute and let $\mathbb{K}$ be a field whose characteristic does not divide the order of any maximal subgroup of $S$. Denote by $R(S)$ the set of regular elements of $S$. It is known that $R(S)$ is an inverse subsemigroup of $S$. Steinberg proved \cite[Section 8.1]{Steinberg2008} that the algebra $\mathbb{K}R(S)$ is isomorphic to the maximal semisimple image of $\mathbb{K}S$. In this section we prove a similar result for finite $E$-Ehresmann semigroups whose corresponding Ehresmann category is an EI-category. More precisely, let $S$ be a finite $E$-Ehresmann semigroup with a corresponding
finite Ehresmann category $C$ such that $C$ is an EI-category. Assume that $\mathbb{K}$
is a field such that the orders of the endomorphism groups of all objects in $C$
are invertible in $\mathbb{K}$. Recall that we denote by $\Reg_E(S)$ (see \remref{OnSetReg}) the set of elements of $S$ corresponding to isomorphisms in $C$. We will prove that $\Reg_E(S)$  is an inverse subsemigroup of $S$ and $\mathbb{K}\Reg_E(S)$ is isomorphic to the maximal semisimple image of $\mathbb{K}S$.

We start by recalling some elementary definitions and facts about algebras. Recall
that a non-zero module is called \emph{simple} if it has no non-trivial
submodules. A module is called \emph{semisimple} if it is a direct sum of
simple modules. An algebra $A$ is called \emph{semisimple} if all its modules
are semisimple. The \emph{Jacobson radical} of an algebra $A$ denoted $\Rad A$
is the intersection of all its left maximal ideals. Its importance comes
from the fact that if $A$ is finite dimensional then $A/\Rad(A)$ is the maximal semisimple image of
$A$. Other fundamental facts about algebras can be found in the first
chapter of \cite{Assem2006}.

Let $A$ be a finite dimensional associative algebra. It might be
the case that the maximal semisimple image $A/\Rad A$ is isomorphic
to a subalgebra of $A$. For instance, if $A/\Rad A$ is a separable algebra then the Wedderburn-Malcev theorem \cite[Theorem 72.19]{Curtis1966}
assures that $A/\Rad A$ is isomorphic to a subalgebra
of $A$. However, if $A$ is an algebra of a category or a semigroup,
it is usually not the case that an isomorphic copy of $A/\Rad A$
is spanned by some subcategory or a subsemigroup. However, if $A$
an EI-category algebra the situation is better.
\begin{prop}
\cite[Proposition 4.6]{Li2011} Let $C$ be a finite EI-category and let $\mathbb{K}$ be a field such that the orders
of the endomorphism groups of all objects in $C$ are invertible in $\mathbb{K}$.
The radical $\Rad(\mathbb{K}C)$ is spanned by all the non-invertible
morphisms of $C$.\end{prop}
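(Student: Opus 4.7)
Let $N \subseteq \mathbb{K}C$ denote the $\mathbb{K}$-span of all non-invertible morphisms of $C$. The plan is to show $N = \Rad(\mathbb{K}C)$ by establishing (i) that $N$ is a nilpotent two-sided ideal, giving $N \subseteq \Rad(\mathbb{K}C)$, and (ii) that $\mathbb{K}C/N$ is semisimple, giving the reverse containment.

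The key technical lemma is: in an EI-category, if a composable product $fg$ is invertible, then both $f$ and $g$ are invertible. To prove it, write $f\colon e\to e'$, $g\colon e'\to e''$, and let $h\colon e''\to e$ be the two-sided inverse of $fg$, so that $fgh=1_e$ and $hfg=1_{e''}$. Then $ghf\in\End(e')$, and associativity combined with $hfg=1_{e''}$ gives
\[
(ghf)(ghf)=g(hfg)hf=g\cdot 1_{e''}\cdot hf=ghf,
\]
so $ghf$ is an idempotent endomorphism. By EI it is invertible, and an invertible idempotent equals the identity, so $ghf=1_{e'}$. Together with $fgh=1_e$ this shows $gh$ is a two-sided inverse of $f$ and $hf$ is a two-sided inverse of $g$. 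Contrapositively, if at least one of $f,g$ is non-invertible then $fg$ is non-invertible, so $N$ is a two-sided ideal of $\mathbb{K}C$. The same lemma also shows that a non-invertible morphism $f\colon e\to e'$ forces $e\not\cong e'$, since otherwise composing $f$ with the inverse of any isomorphism $e\to e'$ would produce an invertible endomorphism of $e$ with a non-invertible factor.

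The isomorphism classes of $C^0$ therefore form a finite poset under $[e]\leq[e']$ iff $\Hom_C(e,e')\neq\emptyset$ (antisymmetry again uses the lemma, applied to an endomorphism $fg\in\End(e)$), and every non-invertible morphism $f$ satisfies $[\db(f)]<[\rb(f)]$. Any composable product of $k$ non-invertible morphisms thus produces a strict chain of length $k$ in this finite poset, bounding $k$ above by some $m$. Hence $N^{m+1}=0$, so $N$ is nilpotent and $N\subseteq\Rad(\mathbb{K}C)$.

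For the reverse inclusion, $\mathbb{K}C/N$ is naturally isomorphic as a $\mathbb{K}$-algebra to the groupoid algebra $\mathbb{K}G$, where $G$ is the maximal subgroupoid of $C$; this uses that the composition of two invertible morphisms is invertible (when defined), so the span of invertible morphisms is a subalgebra complementary to $N$. Decomposing $G$ into its connected components $G_i$ and invoking the standard isomorphism $\mathbb{K}G_i\cong M_{n_i}(\mathbb{K}H_i)$, where $n_i$ is the number of objects of $G_i$ and $H_i$ is the automorphism group of any object in $G_i$, reduces semisimplicity of $\mathbb{K}C/N$ to semisimplicity of each $\mathbb{K}H_i$; the latter is Maschke's theorem, applicable because $|H_i|$ is invertible in $\mathbb{K}$ by hypothesis. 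The main obstacle in this plan is establishing the technical lemma about invertible composites; once it is in hand, the rest is assembly of standard structural facts.
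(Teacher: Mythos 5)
Your proof is correct. The paper itself gives no argument for this proposition---it is quoted from Li's work with a citation only---and your argument is the standard one for that result: the key lemma that a composable product in an EI-category is invertible only if both factors are (proved via the idempotent $ghf$), the resulting partial order on isomorphism classes of objects showing the span $N$ of non-invertible morphisms is a nilpotent ideal, and the identification of $\mathbb{K}C/N$ with a groupoid algebra whose semisimplicity follows from Maschke's theorem under the stated invertibility hypothesis. All steps check out, so this supplies a complete self-contained proof of the cited fact.
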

\begin{cor}
\label{cor:SemisimplePartOfEICategoryAlgebra}Let $C$ and $\mathbb{K}$
be as above. The subalgebra of $\mathbb{K}C$ spanned by all the invertible
morphisms is isomorphic to the maximal semisimple image $\mathbb{K}C/\Rad(\mathbb{K}C)$.
\end{cor}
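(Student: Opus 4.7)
The plan is to observe that the morphism set $C^{1}$ decomposes as a disjoint union $I \sqcup N$, where $I$ consists of invertible morphisms and $N$ of non-invertible ones. This induces a vector-space decomposition $\mathbb{K}C = \mathbb{K}I \oplus \mathbb{K}N$, where I write $\mathbb{K}I$ and $\mathbb{K}N$ for the $\mathbb{K}$-spans of $I$ and $N$ inside $\mathbb{K}C$. By the proposition quoted immediately above, $\mathbb{K}N = \Rad(\mathbb{K}C)$, so the composite linear map
\[
\mathbb{K}I \hookrightarrow \mathbb{K}C \twoheadrightarrow \mathbb{K}C/\Rad(\mathbb{K}C)
\]
is a vector-space isomorphism: it is injective because its kernel is $\mathbb{K}I \cap \mathbb{K}N = 0$, and surjective because $\mathbb{K}I + \mathbb{K}N = \mathbb{K}C$.

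Next I would verify that $\mathbb{K}I$ is actually a subalgebra of $\mathbb{K}C$, so that the displayed map becomes an algebra homomorphism. For two invertible morphisms $x, y \in I$, either $\rb(x) \neq \db(y)$, in which case their product in $\mathbb{K}C$ is $0 \in \mathbb{K}I$, or $\rb(x) = \db(y)$, in which case $x \cdot y$ is again an isomorphism (composition of isomorphisms in any category is an isomorphism), hence lies in $I$. Thus $\mathbb{K}I$ is closed under multiplication; it also contains the identity $\sum_{e \in C^{0}} 1_{e}$ of $\mathbb{K}C$, since all identity morphisms are invertible. Therefore the inclusion $\mathbb{K}I \hookrightarrow \mathbb{K}C$ is a homomorphism of unital algebras, and composing with the quotient by $\Rad(\mathbb{K}C)$ gives an algebra homomorphism which, by the previous paragraph, is a bijection.

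Combining both steps yields an isomorphism of algebras $\mathbb{K}I \cong \mathbb{K}C/\Rad(\mathbb{K}C)$, which is precisely the statement of the corollary. No significant obstacle is anticipated: the proof is essentially a bookkeeping consequence of the radical description, once one notes that ``invertible'' is preserved under composition in any category and that non-isomorphisms and isomorphisms are disjoint sets of basis vectors in $\mathbb{K}C$.
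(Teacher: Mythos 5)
Your proof is correct and is exactly the argument the paper has in mind: the corollary is stated without proof as an immediate consequence of the quoted radical description, via the basis decomposition $C^{1}=I\sqcup N$ and the fact (used explicitly later in the paper) that the product of two invertible morphisms in $\mathbb{K}C$ is either invertible or $0$. Nothing further is needed.
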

We now want to ``translate'' this result to semigroup language
using \thmref{MainThm}.
Recall that we denote by $\Reg_{E}(S)$ the elements of $S$ that
correspond to invertible morphisms in $C$. According to \lemref{InvertibleCriterion},
\[
\Reg_{E}(S)=\{a\in S\mid a^{+}\Rc a\Lc a^{\ast}\}.
\]

A natural question is whether this set forms a subsemigroup. This question is also considered in other contexts in \cite{Lawson1990,Zenab2013}. The next lemma gives an affirmative answer in the case of $E$-Ehresmann semigroups.

\begin{lem}
\label{lem:RegESubsemigroup}Let $S$ be an $E$-Ehresmann semigroup. Then the set $\Reg_{E}(S)$ is an inverse subsemigroup of $S$.\end{lem}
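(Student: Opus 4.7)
The plan is to exploit the bijection $S \leftrightarrow C = \mathbf{C}(S)$ of \thmref{EhresmannIsoTheorem} together with the characterization from \lemref{InvertibleCriterion} identifying $\Reg_E(S)$ with the set of elements whose associated morphism is invertible in $C$. This translates every question about $\Reg_E(S)$ into a question about the invertible morphisms of $C$, where the category structure is far more transparent. I will prove closure under multiplication via the category, and then deduce the inverse-semigroup property from regularity plus commuting idempotents.

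The technical heart of the argument is a sub-lemma that I would prove first: in any Ehresmann category, if $y$ is an isomorphism and $g \leq_r \db(y)$ is an object, then the restriction $(g \mid y)$ is an isomorphism (and dually for co-restrictions). My plan for this is as follows. Let $y^{-1}$ be the inverse of $y$ and set $g' = \rb((g \mid y))$; from $(g \mid y) \leq_r y$ one obtains $g' \leq_r \rb(y) = \db(y^{-1})$, so the restriction $(g' \mid y^{-1})$ exists. By \ref{enu:CatWithOrderCompatability} the composition $(g \mid y)(g' \mid y^{-1})$ is $\leq_r y \cdot y^{-1} = 1_{\db(y)}$ and has domain $g$; since $1_g$ shares both properties, the uniqueness clause of \ref{enu:EhresmannCatrestriction} forces equality. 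An analogous computation in the reverse order shows $(g' \mid y^{-1})(g \mid y) = 1_{g'}$, so $(g \mid y)$ is invertible with inverse $(g' \mid y^{-1})$. The co-restriction case will follow from the dual argument using $\leq_l$.

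With the sub-lemma in hand, the rest is bookkeeping. For closure, the product formula \Eqref{EhresmannProduct} gives $C(ab) = (C(a) \mid f) \cdot (f \mid C(b))$ with $f = a^* \wedge b^+$; when $a, b \in \Reg_E(S)$, both factors are isomorphisms by the sub-lemma, so $C(ab)$ is invertible and $ab \in \Reg_E(S)$. For the inverse-semigroup axioms, I would set $a^{\sharp} = \mathbf{S}(C(a)^{-1})$ and use \remref{EhresmannProduct} to verify $aa^{\sharp} = a^+$ and $a^{\sharp} a = a^*$, from which the regularity identities $aa^{\sharp}a = a$ and $a^{\sharp} a a^{\sharp} = a^{\sharp}$ are immediate (the latter using $(a^{\sharp})^+ = a^*$). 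The idempotents of $\Reg_E(S)$ correspond to idempotent isomorphisms of $C$, which must be identity morphisms; hence they are precisely $E$ and therefore commute. Since a regular semigroup with commuting idempotents is an inverse semigroup, this completes the plan.

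The main obstacle is the sub-lemma on invertibility of restrictions and co-restrictions; everything else reduces to a routine translation between $S$ and $C$ via the functors $\mathbf{S}$ and $\mathbf{C}$.
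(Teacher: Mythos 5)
Your closure argument is a genuinely different and valid route. Where the paper proves $ab\Rc(ab)^{+}$ by a direct computation inside $S$ (culminating in the identity $(ab)^{+}=ab\,b^{-1}a^{-1}(ab)^{+}$), you work in $C$: you first show that restrictions and co-restrictions of isomorphisms are isomorphisms, and then read closure off the product formula $C(ab)=(C(a)\mid f)\cdot(f\mid C(b))$ with $f=\rb(C(a))\wedge\db(C(b))$. Your sub-lemma is correct as sketched: the uniqueness clause of \ref{enu:EhresmannCatrestriction} applied to $1_{\db(y)}$ forces $(g\mid y)(g'\mid y^{-1})=1_{g}$, and from this $\rb((g'\mid y^{-1}))=g$, so the reverse composite exists and the same uniqueness argument gives $1_{g'}$. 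The regularity step ($aa^{\sharp}=a^{+}$, $a^{\sharp}a=a^{\ast}$, hence $aa^{\sharp}a=a$ and $a^{\sharp}aa^{\sharp}=(a^{\sharp})^{+}a^{\sharp}=a^{\sharp}$) is also fine. This is arguably more conceptual than the paper's computation, at the price of the extra sub-lemma.

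The gap is in your last step. An idempotent $f$ of $\Reg_{E}(S)$ does not ``correspond to an idempotent isomorphism of $C$'': the equation $f\cdot f=f$ in $S$ unwinds to $(C(f)\mid g)\cdot(g\mid C(f))=C(f)$ with $g=C(f^{\ast})\wedge C(f^{+})$, which is the pseudo-product, not the categorical composite $C(f)\cdot C(f)$; the latter is not even defined unless $f^{\ast}=f^{+}$. Proving $f^{\ast}=f^{+}$ (equivalently, that $C(f)$ is an endomorphism) is exactly the content of this step, and your factorization only exhibits $C(f)$ as an isomorphism $C(f^{+})\to g\to C(f^{\ast})$ through the smaller object $g$, which in a general Ehresmann category does not force $g=C(f^{+})=C(f^{\ast})$. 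The paper closes this with the Clifford--Miller theorem. Alternatively you can patch it inside your own framework: from $f^{2}=f$ and your identities, $f^{\ast}f^{+}=(f^{\sharp}f)(ff^{\sharp})=f^{\sharp}ff^{\sharp}=f^{\sharp}$, so $f^{\sharp}\in E$; since $(f^{\sharp})^{+}=f^{\ast}$ and $(f^{\sharp})^{\ast}=f^{+}$ while elements of $E$ satisfy $e^{+}=e^{\ast}=e$, this gives $f^{+}=f^{\sharp}=f^{\ast}$, whence $f\Hc f^{+}$ and $f=f^{+}\in E$ because an $\Hc$-class contains at most one idempotent. With that repair the rest (idempotents are exactly $E$, hence commute, hence $\Reg_{E}(S)$ is inverse) goes through.
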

\begin{proof}
We first prove that $\Reg_{E}(S)$  forms a subsemigroup\footnote{This part of the proof is actually due to Michael Kinyon who proved it using the automated theorem prover called Prover9. For general information on Prover9 see \cite{Prover9}}. Let $a$ and $b$ be two elements of $\Reg_E(S)$. We will prove only $ab \Rc (ab)^{+}$ because the proof that $ab \Lc (ab)^{\ast}$ is similar. For every $x \in \Reg_E(S)$ it is convenient to denote by $x^{-1}$ the unique inverse of $x$ such that $xx^{-1}=x^{+}$ and $x^{-1}x=x^{\ast}$.
Clearly,
\[
(ab)^{+}ab=ab.
\]
so we need only to prove that $ab$ is $\Rc$-above $(ab)^{+}$. Now, note that
\begin{align*}
a^{-1}(ab)^{+} &= (a^{-1}(ab)^{+})^{+}a^{-1}(ab)^{+}=(a^{-1}ab)^{+}a^{-1}(ab)^{+} \\
&= (a^{\ast}b)^{+}a^{-1}(ab)^{+}=a^{\ast}b^{+}a^{-1}(ab)^{+}.
\end{align*}
Multiplying by $a$ on the left we get
\[
a^{+}(ab)^{+}=ab^{+}a^{-1}(ab)^{+}=abb^{-1}a^{-1}(ab)^{+}.
\]
But since $(ab)^{+}$ is the least element from $E$ which is left identity for $ab$ and since
\[
a^{+}ab=ab
\]
we have that 
\[
(ab)^{+}=a^{+}(ab)^{+}=abb^{-1}a^{-1}(ab)^{+}.
\]
This implies that $ab \Rc (ab)^{+}$ so $\Reg_E(S)$ is indeed a subsemigroup.
It is left to show that $\Reg_E(S)$ is an inverse subsemigroup. Clearly, $E\subseteq\Reg_{E}(S)$ and all the elements
of $\Reg_{E}(S)$ are regular. Moreover, any idempotent $f\in\Reg_{E}(S)$
must be in $E$. Indeed, if $f^{+}\Rc f\Lc f^{\ast}$ then $f^{\ast}f^{+} \Dc f$ by the Clifford-Miller theorem \cite[Proposition 2.3.7]{Howie1995} and it follows that $f^{\ast}$
and $f^{+}$ commute only if $f^{\ast}=f^{+}$. In this case
$f\Hc f^{+}$ so $f=f^{+}\in E$. So $\Reg_{E}(S)$  is regular and its set of idempotents is the semilattice $E$,
hence it is an inverse semigroup as required.
\end{proof}

\begin{cor}
\label{cor:RegEDownIdeal}
$\Reg_{E}(S)$ is a down ideal with respect to $\leq_{r}$ and $\leq_{l}$.
\end{cor}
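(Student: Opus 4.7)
The plan is to derive this corollary as an immediate consequence of Lemma \ref{lem:RegESubsemigroup}, which has already established that $\Reg_E(S)$ is a subsemigroup of $S$. The key observation is that the orders $\leq_r$ and $\leq_l$ are witnessed by left/right multiplication by elements of $E$, so being a down-ideal reduces to multiplicative closure once we know that $E \subseteq \Reg_E(S)$.

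Concretely, suppose $a \in \Reg_E(S)$ and $b \leq_r a$. By the definition of $\leq_r$, we have $b = b^+ a$, and $b^+$ lies in the semilattice $E$. I would first note that $E \subseteq \Reg_E(S)$: for any $e \in E$ one has $e^+ = e = e^*$ and $e \Rc e \Lc e$ trivially, so the criterion of Lemma \ref{lem:InvertibleCriterion} holds (equivalently, $C(e)$ is an identity morphism in $C$, which is always an isomorphism). Hence both factors $b^+$ and $a$ lie in $\Reg_E(S)$, and applying Lemma \ref{lem:RegESubsemigroup} gives $b = b^+ \cdot a \in \Reg_E(S)$. The argument for $\leq_l$ is completely dual: $b \leq_l a$ yields $b = a \cdot b^*$ with $b^* \in E \subseteq \Reg_E(S)$, and the product lies in $\Reg_E(S)$ by the same lemma.

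Since the only nontrivial input is the subsemigroup property, which was the content of Lemma \ref{lem:RegESubsemigroup}, there is no real obstacle here; the corollary is essentially a packaging statement translating multiplicative closure into closure under the natural partial orders.
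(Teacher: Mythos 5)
Your proof is correct and follows essentially the same route as the paper: both arguments write $b\leq_r a$ as $b=ea$ for some $e\in E$ (you take $e=b^{+}$), observe $E\subseteq\Reg_E(S)$, and invoke the subsemigroup property from Lemma \ref{lem:RegESubsemigroup}. No issues.
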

\begin{proof}
If $a\in \Reg_E(s)$ and $b \leq _r a$ ($b \leq _l a$) then $b=ea$ ($b=ae$) for some $e\in E$. \lemref{RegESubsemigroup} then implies that  $b\in \Reg_E(S)$. 
\end{proof}

\begin{prop}
Let $S$ be a finite $E$-Ehresmann semigroup whose corresponding Ehresmann
category $C$ is an EI-category. Then $\mathbb{K}\Reg_{E}(S)$ is isomorphic to $\mathbb{K}S/\Rad(\mathbb{K}S)$.
\end{prop}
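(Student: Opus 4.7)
The plan is to transport the statement across the isomorphism $\varphi : \mathbb{K}S \to \mathbb{K}C$ of \thmref{MainThm}, use \corref{SemisimplePartOfEICategoryAlgebra} to identify the maximal semisimple image on the category side, and then verify that $\varphi$ restricts to an isomorphism between $\mathbb{K}\Reg_{E}(S)$ and the subalgebra of $\mathbb{K}C$ spanned by the invertible morphisms. The key combinatorial fact that makes this work is \corref{RegEDownIdeal}: $\Reg_{E}(S)$ is a down ideal in $(S,\leq_{r})$.

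First, I would check that the $\mathbb{K}$-submodule $A \subseteq \mathbb{K}C$ spanned by the invertible morphisms is a subalgebra: the composition of two invertible morphisms is invertible when it is defined (and is $0$ otherwise in $\mathbb{K}C$), and $A$ contains all identity morphisms. By \corref{SemisimplePartOfEICategoryAlgebra}, $A$ is isomorphic to $\mathbb{K}C/\Rad(\mathbb{K}C)$. Since $\varphi$ is an algebra isomorphism, $\varphi$ induces an isomorphism $\mathbb{K}S/\Rad(\mathbb{K}S) \cong \mathbb{K}C/\Rad(\mathbb{K}C) \cong A$, so it suffices to show $\varphi(\mathbb{K}\Reg_{E}(S)) = A$ as subalgebras of $\mathbb{K}C$.

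For this I would argue as follows. Under the bijection $a \mapsto C(a)$ between $S$ and $C^{1}$, \lemref{InvertibleCriterion} identifies $\Reg_{E}(S)$ with the set of invertible morphisms of $C$. Now take $a \in \Reg_{E}(S)$. By \corref{RegEDownIdeal}, every $b \leq_{r} a$ lies in $\Reg_{E}(S)$, so $C(b)$ is invertible for every such $b$; hence
\[
\varphi(a) = \sum_{b \leq a} C(b) \in A.
\]
Conversely, take an invertible morphism $x \in C^{1}$. Every $y \leq_{r} x$ corresponds to an element $S(y) \leq_{r} S(x)$ of $\Reg_{E}(S)$ (again by \corref{RegEDownIdeal}), so
\[
\psi(x) = \sum_{y \leq x} \mu(y,x) S(y) \in \mathbb{K}\Reg_{E}(S),
\]
where $\psi = \varphi^{-1}$. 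Therefore $\varphi$ restricts to a $\mathbb{K}$-module isomorphism $\mathbb{K}\Reg_{E}(S) \to A$, and because $\varphi$ is an algebra homomorphism, this restriction is an algebra isomorphism. Composing with the isomorphism $A \cong \mathbb{K}C/\Rad(\mathbb{K}C) \cong \mathbb{K}S/\Rad(\mathbb{K}S)$ gives the desired conclusion.

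The only step that requires real care is the verification that $\varphi$ maps $\mathbb{K}\Reg_{E}(S)$ onto $A$; both inclusions depend critically on the down-ideal property of $\Reg_{E}(S)$, which is why \lemref{RegESubsemigroup} and \corref{RegEDownIdeal} were established beforehand. I do not expect any genuine obstacle beyond this bookkeeping, since the hard analytic and combinatorial content (the M\"{o}bius inversion in \thmref{MainThm} and the description of $\Rad(\mathbb{K}C)$ for EI-categories) has already been carried out.
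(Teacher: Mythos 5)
Your proposal is correct and takes essentially the same route as the paper: transport along the isomorphism of \thmref{MainThm}, identify the maximal semisimple image on the category side with the span of the invertible morphisms via \corref{SemisimplePartOfEICategoryAlgebra}, and use the down-ideal property of $\Reg_{E}(S)$ from \corref{RegEDownIdeal} to match that span with $\mathbb{K}\Reg_{E}(S)$. The only cosmetic difference is that the paper proves the single inclusion $\psi(B)\subseteq\mathbb{K}\Reg_{E}(S)$ and concludes equality by comparing (finite) dimensions, whereas you verify both inclusions directly.
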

\begin{proof}
Denote by $B$ the subvector space of $\mathbb{K}C$
spanned by all the invertible morphisms. Clearly, $B$ is a subalgebra
of dimension $|\Reg_{E}(S)|$, since the product in $\mathbb{K}C$ of two invertible morphisms in $C$ is either another invertible morphism or $0$. Denote by $\psi$ the isomorphism $\psi:\mathbb{K}C\to\mathbb{K}S$
as in \thmref{MainThm}. Since 
\[
\psi(C(a))=\sum_{b\leq_{r}a}\mu(b,a)b
\]
and since $\Reg_{E}(S)$ is a down ideal with respect to $\leq_{r}$ by \corref{RegEDownIdeal}
we have that $\psi(B)\subseteq\mathbb{K}\Reg_{E}(S)$. But since they
have the same (finite) dimension we must have $\psi(B)=\mathbb{K}\Reg_{E}(S)$. Now, by \corref{SemisimplePartOfEICategoryAlgebra}
$B$ is isomorphic to the maximal semisimple image of $\mathbb{K}C$
so $\psi(B)=\mathbb{K}\Reg_{E}(S)$ is isomorphic to the maximal semisimple
image $\mathbb{K}S/\Rad(\mathbb{K}S)$ of $\mathbb{K}S$ as required.
\end{proof}

\textbf{Acknowledgements: } The author is grateful to Michael Kinyon for proving that $\Reg_E(S)$ is a subsemigroup (the main part of  \lemref{RegESubsemigroup}). The author also thanks the referee for his\textbackslash her helpful comments.

\bibliographystyle{plain}
\bibliography{library}

\end{document}